\theoremstyle{plain}
\newtheorem{thm}{Theorem}[chapter]
\newtheorem{lem}[thm]{Lemma}
\newtheorem{prop}[thm]{Proposition}
\newtheorem{cor}[thm]{Corollary}
\theoremstyle{definition}
\newtheorem{defn}[thm]{Definition}
\newtheorem{ex}{Exercise}
\theoremstyle{remark}
\def\val#1#2#3#4{\mathrm{val}_{#1,#2,#3}(#4)}
\def\sat#1#2#3#4{#1,#2,#3 \models #4}
\def\M{\mathfrak{M}}
\let\lif\rightarrow
\let\liff\leftrightarrow
\let\seq\equiv
\def\eps{\ensuremath{\varepsilon}}
\def\mrm#1{\ensuremath{\mathrm{#1}}}
\def\card#1{\left|#1\right|}
\def\deg#1{\mathrm{deg}(#1)}
\def\rk#1{\mathrm{rk}(#1)}
\def\Le{L_{\eps}}
\def\Lea{L_{\eps\forall}}
\def\La{L_\forall}
\def\Trm{\mrm{Trm}}
\def\Frm{\mrm{Frm}}
\def\Var{\mrm{Var}}
\def\Pred{\mrm{Pred}}
\def\Fct{\mrm{Fct}}
\def\Typ{\mrm{Typ}}
\def\FV{\mrm{FV}}
\def\meps#1#2{\ensuremath{\varepsilon_#1\,#2}}
\def\st#1#2#3{\ensuremath{#1[#2/#3]}}
\def\ST#1#2#3{\ensuremath{#1\{#2/#3\}}}
\def\ext{\mathrm{ext}}
\def\inst#1{#1^\textrm{inst}}
\def\EC{\mrm{EC}}
\def\PC{\ensuremath{\mathrm{EC}_\forall}}
\def\ECe{\ensuremath{\mathrm{EC}_\eps}}
\def\ECex{\ensuremath{\mathrm{EC}_\eps^\ext}}
\def\PCe{\ensuremath{\mathrm{EC}_{\eps\forall}}}
\newcommand*{\proves}[2][{}]{\mathrel{\vdash^{#1}_{#2}}}
\newcommand*{\nproves}[1]{\mathrel{\not\vdash_{#1}}}
\newcounter{version}
\date{November 13, 2014---v.~\theversion}
\newcommand{\revprint}[2]{\ifthenelse{#1 =
    \value{version}}{\marginnote{\color{green}\normalfont\normalsize{#2}}}{}}
\def\rev#1{\revprint{#1}{\textsc{rev}}}
\def\new#1{\revprint{#1}{\textsc{new}}}
\title{Lectures on the Epsilon Calculus}
\author{\href{http://ucalgary.ca/rzach}{Richard Zach}}
\begin{document}
\maketitle

\tableofcontents*

\chapter*{Introduction}

These notes were written for use in a short course on the epsilon
calculus I taught at the Vienna University of Technology as an Erasmus
Mundus Fellow in 2009.  I had at one point hoped to expand them
substantially, and this may yet happen.  In the meantime, perhaps they
prove useful to someone even if the results presented remain quite
basic, proofs are left as exercises, explanations are sparse (to say
the least), and there are no references (but see
\href{http://plato.stanford.edu/entries/epsilon-calculus/}{here}). Semantics
for the epsilon calculus, completeness proofs, as well as proofs of
the first epsilon theorem, are, after all, still hard to find, at
least in English.  I claim no credit to the results contained herein;
in particular, I've learned the intensional and extensional semantics
and completness results from Grigori Mints' lectures on the epsilon
calculus. Please \href{http://ucalgary.ca/rzach}{contact me} if you
find a mistake.

\begin{ex}
Find the mistakes in these notes.
\end{ex}

\chapter{Syntax}

\section{Languages}

\begin{defn}
  The language of of the elementary calculus~$L_\EC$ contains the
  following symbols:
\begin{enumerate}
\item Variables~\Var: $x_0$, $x_1$, \dots
\item Function symbols~$\Fct^n$ of arity $n$, for each $n \ge 0$: $f_0^n$,
  $f_1^n$, \dots
\item Predicate symbols~$\Pred^n$ of arity $n$, for each $n \ge 0$: $P^n_0$,
  $P^n_1$, \dots
\item Identity: $=$
\item Propositional constants: $\bot$, $\top$.
\item Propositional operators: $\lnot$, $\land$,
  $\lor$, $\lif$, $\liff$.
\item Punctuation: parentheses: $($, $)$; comma: , 
\end{enumerate}
For any language $L$, we denote by $L^{-}$ the language~$L$ without
the identity symbol, by $L_\eps$ the language $L$ plus the
symbol~$\eps$, and by $L_\forall$ the language~$L$ plus the
quantifiers $\forall$ and $\exists$.  We will usually leave out the
subscript~$\EC$, and write $\La$ for the language of the predicate
calculus, $\Le$ for the language of the \eps-calculus, and $\Lea$ for
the language of the extended epsilon calculus.
\end{defn}

\begin{defn}
  The \emph{terms}~$\Trm$ and \emph{formulas}~$\Frm$
  of~$\Lea$ are defined as follows.
\begin{enumerate}
\item Every variable~$x$ is a term, and $x$ is free in it.
\item If $t_1$, \dots, $t_n$ are terms, then $f^n_i(t_1, \dots, t_n)$
  is a term, and $x$ occurs free in it wherever it occurs free in
  $t_1$, \dots, $t_n$.
\item If $t_1$, \dots, $t_n$ are terms, then $P^n_i(t_1, \dots, t_n)$
  is an (atomic) formula, and $x$ occurs free in it wherever it occurs
  free in $t_1$, \dots, $t_n$.
\item $\bot$ and $\top$ are formulas.
\item If $A$ is a formula, then $\lnot A$ is a formula, with the same
  free occurrences of variables as~$A$.
\item If $A$ and $B$ are formulas, then $(A \land B)$, $(A \lor B)$,
  $(A \lif B)$, $(A \liff B)$ are formulas, with the same free
  occurrences of variables as $A$ and $B$.
\item If $A$ is a formula in which $x$ has a free occurrence but no
  bound occurrence, then $\forall x\, A$ and $\exists x\, A$ are
  formulas, and all occurrences of $x$ in them are bound.
\item If $A$ is a formula in which $x$ has a free occurrence but no
  bound occurrence, then $\meps x A$ is a term, and all occurrences of
  $x$ in it are bound.
\end{enumerate}
The terms~$\Trm(L)$ and formulas~$\Frm(L)$ of a langauge $L$ are those
terms and formulas of~$\Lea$ in the vocabulary of~$L$.
\end{defn}

If $E$ is an expression (term or formula), then $\FV(E)$ is the set of
variables which have free occurrences in~$E$.  $E$ is called
\emph{closed} if $\FV(E) = \emptyset$.  A closed formula is also
called a \emph{sentence}.

When $E$, $E'$ are expressions (terms or formulas), we write $E \seq
E'$ iff $E$ and $E'$ are syntactically identical up to a renaming of
bound variables.  We say that a term $t$ is \emph{free for $x$ in $E$} iff
$x$ does not occur free in the scope of an \eps-operator $\meps y{}$ or
quantifier $\forall y$, $\exists y$ for any~$y \in \FV(t)$.

If $E$ is an expression and $t$ is a term, we write $\st E x t$ for
the result of substituting every free occurrence of~$x$ in~$E$ by~$t$,
provided $t$ is free for $x$ in $E$, and renaming bound variables
in~$t$ if necessary.  

If $t$ is not free for $x$ in $E$, $\st E x t$ is any formula $\st {E'}
x t$ where $E' \seq E$ and $t$ is free for $x$ in $E'$.  If $E' \seq
\st{\st E {x_1} {t_1} \dots}{x_n}{t_n}$, $E'$ it is called an
\emph{instance of~$E$}.

We write $E(x)$ to indicate that $x \in \FV(E)$, and $E(t)$ for $\st E
x t$. It will be apparent from the context which variable~$x$ is
substituted for.

\begin{defn}\new{2}
  A term~$t$ is a \emph{subterm} of an expression (term or
  formula)~$E$, if for some $E'(x)$, $E \seq \st {E'(x)} x t$.  It is
  a \emph{proper subterm} of a term $u$ if it is a subterm of $u$ but
  $t \not\seq u$.

  A term~$t$ is an \emph{immediate subterm} of an expression~$E$ if
  $t$ is a subterm of $E$, but not a subterm of a proper subterm
  of~$E$.
\end{defn}

\begin{defn}\new{2}
  If $t$ is a subterm of $E$, i.e., for some $E'$ we have $E \seq \st
  {E'}{x}{t}$, then $\ST E t u$ is $\st {E'} x u$.
\end{defn}

We intend $\ST E t u$ to be the result of replacing every occurrence
of $t$ in $E$ by $u$. But, the ``brute-force'' replacement of every
occurrence of $t$ in $u$ may not be what we have in mind here. (a)~We
want to replace not just every occurrence of $t$ by $u$, but every
occurrence of a term $t' \seq t$.  (b)~$t$ may have an occurrence in
$E$ where a variable in $t$ is bound by a quantifier or \eps{} outside
$t$, and such occurrences shouldn't be replaced (they are not subterm
occurrences). (c)~When replacing $t$ by $u$, bound variables in $u$
might have to be renamed to avoid conflicts with the bound variables
in $E'$ and bound variables in $E'$ might have to be renamed to avoid
free variables in $u$ being bound.

\begin{defn}[\eps-Translation]
  If $E$ is an expression, define $E^\eps$ by:
  \begin{enumerate}
  \item $E^\eps = E$ if $E$ is a variable, a constant symbol,
    or~$\bot$.
  \item If $E = f^n_i(t_1, \dots, t_n)$, $E^\eps = f^n_i(t_1^\eps,
    \dots, t_n^\eps)$.
  \item If $E = P^n_i(t_1, \dots, t_n)$, $E^\eps = P^n_i(t_1^\eps,
    \dots, t_n^\eps)$.
  \item If $E = \lnot A$, then $E^\eps = \lnot A^\eps$.
  \item If $E = (A \land B)$, $(A \lor B)$, $(A \lif B)$, or $(A \liff
    B)$, then $E^\eps = (A^\eps \land B^\eps)$, $(A^\eps \lor
    B^\eps)$, $(A^\eps \lif B^\eps)$, or $(A^\eps \liff B^\eps)$,
    respectively.
  \item \rev{2} If $E = \exists x\, A(x)$ or $\forall x\, A(x)$, then $E^\eps
    = A^\eps(\meps x{A(x)^\eps})$ or $A^\eps(\meps x {\lnot A(x)^\eps})$.
  \item If $E = \meps x {A(x)}$, then $E^\eps = \meps x{A(x)^\eps}$.
  \end{enumerate}
\end{defn}

\section[\eps-Types, Degree, and Rank]{\new{2}\eps-Types, Degree, and
  Rank}

\begin{defn}
  An \eps-term $p \seq \meps x {B(x; x_1, \dots, x_n)}$ is a
  \emph{type of an \eps-term~$\meps x {A(x)}$} iff
  \begin{enumerate}
  \item $p \seq \st{\st{\meps x {A(x)}}{x_1}{t_1}\dots}{x_n}{t_n}$
  for some terms $t_1$, \dots,~$t_n$.
  \item $\FV(p) = \{x_1, \dots, x_n\}$.
  \item $x_1$, \dots, $x_n$ are all immediate subterms of $p$.
  \item Each $x_i$ has exactly one occurrence in~$p$.
  \item The occurrence of $x_i$ is left of the occurrence of $x_j$ in
    $p$ if $i < j$.
  \end{enumerate}
  We denote the set of types of a langauge as~\Typ.
\end{defn}

\begin{prop}
  The type of an epsilon term~$\meps x {A(x)}$ is unique up to
  renaming of bound, and disjoint renaming of free variables, i.e., if
  $p = \meps x{B(x; x_1, \dots, x_n)}$, $p' = \meps y{B'(y; y_1, \dots,
      y_m)}$ are types of $\meps x {A(x)}$, then $n = m$ and $p' \seq
      \st{\st {p}{x_1}{y_1}\dots}{x_n}{y_n}$
\end{prop}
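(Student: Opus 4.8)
Set $q := \meps x{A(x)}$. The plan is to show that every type of $q$ is produced from $q$ by one and the same recipe --- replace each immediate subterm occurrence of $q$ (i.e.\ each maximal proper subterm occurrence) by a fresh variable, numbering those variables from left to right --- so that two types of $q$ can differ only in the names chosen for the fresh variables, which is exactly renaming of bound and of free variables.

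First I would read off the shape of an arbitrary type $p \seq \meps x{B(x;x_1,\dots,x_n)}$ from conditions (2)--(5), understanding (3) as: the immediate subterms of $p$ are \emph{exactly} $x_1,\dots,x_n$. A constant, a compound term $f(t_1,\dots,t_k)$, or a subordinate \eps-term occurring below the leading $\meps x{}$ in $p$ would be a proper subterm of $p$ that is neither one of the variables $x_i$ nor properly contained in one (a variable has no proper subterms); hence $p$ contains no such material, so $B$ is a Boolean combination of atomic formulas and equations in the variables $x,x_1,\dots,x_n$ alone, each $x_i$ occurs exactly once and in index order, and in particular every $x_i$ occurs in $p$ directly as an argument of a predicate symbol or of $=$, never inside a function symbol or another \eps.

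Next I would use condition (1) --- which, to be consistent with (2)--(5), must be read as asserting that $q$ is an \emph{instance} of $p$ (the substitution going from type to term), say $q \seq \st{\st{p}{x_1}{t_1}\dots}{x_n}{t_n}$ --- and track occurrences. Substituting $t_i$ for the unique occurrence of $x_i$ leaves the formula-context above that position untouched, so in $q$ the term $t_i$ stands with a predicate symbol or $=$ immediately above it; hence each $t_i$ is a maximal proper subterm occurrence of $q$, and, since the $x_i$-positions in $p$ are pairwise non-nested argument positions, the $t_i$-occurrences in $q$ are pairwise non-nested and are listed left to right in the order of the $x_i$, i.e.\ by index (condition (5)). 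Conversely, any maximal proper subterm occurrence of $q$ not among the $t_i$ would have to be a term occurrence contributed by the skeleton $B$; but $B$ has no function symbols, no constants, and no subordinate \eps-terms, so there is none. Hence $t_1,\dots,t_n$ are \emph{all} the maximal proper subterm occurrences of $q$, and so $n$, the list $t_1,\dots,t_n$, and the skeleton of $p$ depend on $q$ alone, the sole remaining freedom being that $x_1,\dots,x_n$ is an arbitrary list of distinct variables (distinct by (4)).

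Applying this to $p$ (variables $x_1,\dots,x_n$) and to $p'$ (variables $y_1,\dots,y_m$) yields $n=m$ at once, and yields that $p$ and $p'$ have the same skeleton over $q$; hence $p' \seq \st{\st{p}{x_1}{y_1}\dots}{x_n}{y_n}$. Choosing $y_1,\dots,y_n$ disjoint from $x_1,\dots,x_n$ makes the iterated substitution coincide with the simultaneous renaming --- this is the ``disjoint renaming of free variables'' in the statement --- and any residual discrepancy (a $y_i$ clashing with the bound variable, or differing bound-variable names) is absorbed into $\seq$. The one genuinely delicate ingredient is the handling of \emph{occurrences}: since a single term can be a maximal proper subterm of $q$ at several positions (so $t_i \seq t_j$ with $i\neq j$ is possible), ``blank out the $i$-th occurrence'' must be made precise by an occurrence-indexed replacement rather than the global operation $\ST E t u$, and the correspondence between the $x_i$-occurrences of $p$, the $t_i$-occurrences of $q$, and the $y_i$-occurrences of $p'$ is then best verified by induction on the construction of $A(x)$. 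I expect that bookkeeping, rather than any conceptual point, to be where the real work lies.
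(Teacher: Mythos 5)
The paper leaves this proposition as an exercise, so there is no official proof to measure you against; your overall strategy---show that any type must arise from $q \seq \meps x{A(x)}$ by replacing its maximal proper subterm occurrences with fresh variables in left-to-right order, so that two types can differ only in the choice of variable names---is the right one, and you are also right that condition (1) must be read with the substitution going from type to term and that (3) must be read as ``exactly.'' But your structural characterization of the skeleton is false, and the ``conversely'' step of your argument rests on it. You claim that $B$ contains no function symbols, no constants, and no \eps-terms, so that each $x_i$ sits immediately under a predicate symbol or $=$. This ignores that a compound term or \eps-term occurring in $p$ which contains the bound variable $x$ (or a variable bound by an inner binder) free is \emph{not} a subterm of $p$ at all---the paper makes exactly this point for subordinate \eps-terms---and hence is not obliged to be one of the $x_i$ or to lie inside one. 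For example, $\meps x{P(f(x,c))}$ has type $\meps x{P(f(x,x_1))}$, whose skeleton contains $f$ and whose $x_1$ sits under a function symbol; and $\meps x{P(x,\meps y{Q(x,y)})}$ is its own type (with $n=0$) and has a subordinate \eps-term in its skeleton. So your stated reason why the $t_i$ exhaust the maximal proper subterm occurrences of $q$ (``$B$ has no function symbols, no constants, and no subordinate \eps-terms, so there is none'') evaporates.

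The step is repairable, but it needs an argument about occurrences rather than about the shape of $B$: if $s$ were a maximal proper subterm occurrence of $q$ distinct from all the $t_i$-occurrences, then (i) $s$ cannot properly contain some $t_i$-occurrence, since the corresponding occurrence in $p$ would be a proper subterm of $p$ properly containing the occurrence of $x_i$, contradicting the immediacy of $x_i$ required by (3); and (ii) if $s$ is disjoint from all $t_i$-occurrences, then $s$ already occurs at the corresponding position of $p$, is closed there (its free variables would have to lie in $\FV(p)=\{x_1,\dots,x_n\}$, but by (4) those occur only at their designated positions), and so yields a proper subterm occurrence of $p$ neither equal to nor contained in any $x_i$, contradicting the ``exactly'' reading of (3). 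With that substituted for the faulty step, the remainder of your argument---$n=m$, identical skeletons, the left-to-right matching via (5), and the caveat about choosing the $y_i$ disjoint from the $x_i$ so that iterated substitution agrees with simultaneous renaming---goes through, and your closing remark that the real work is occurrence bookkeeping is exactly on target.
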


\begin{proof} Exercise.
\end{proof}

\begin{defn}
  An \eps-term $e$ is \emph{nested in} an \eps-term $e'$ if $e$ is a
  proper subterm of~$e$.
\end{defn}

\begin{defn}
  The \emph{degree~$\deg e$} of an \eps-term~$e$ is defined as follows:
  \begin{enumerate}
  \item $\deg e = 1$ iff $e$ contains no nested \eps-terms.
  \item $\deg e = \max\{\deg {e_1}, \dots, \deg{e_n}\} + 1$ if $e_1$,
    \dots,~$e_n$ are all the \eps-terms nested in~$e$.
  \end{enumerate}
  For convenience, let $\deg{t} = 0$ if $t$ is not an \eps-term.
\end{defn}

\begin{defn}
  An \eps-term $e$ is \emph{subordinate to} an \eps-term $e' = \meps x
  A(x)$ if some $e'' \seq e$ occurs in $e'$ and $x \in \FV(e'')$.
\end{defn}

Note that if $e$ is subordinate to $e'$ it is \emph{not} a subterm of
$e'$, because $x$ is free in $e$ and so the occurrence of $e$ (really,
of the variant $e''$) in $e'$ is in the scope of~$\eps_x$.  One might
think that replacing $e$ in $\meps x{A(x)}$ by a new variable $y$
would result in an \eps-term $\meps x{A'(y)}$ so that $e' \equiv \st
{\meps x{A'(y)}}{y}{e}$. But (a) $\meps x{A'(y)}$ is not in general a
term, since it is not guaranteed that $x$ is free in $A'(y)$ and (b)
$e$ is not free for $y$ in $\meps x {A'(y)}$.

\begin{defn}
  The \emph{rank~$\rk e$} of an \eps-term~$e$ is defined as follows:
  \begin{enumerate}
  \item $\rk e = 1$ iff $e$ contains no subordinate \eps-terms.
  \item $\rk e = \max\{\rk {e_1}, \dots, \rk{e_n}\} + 1$ if $e_1$,
    \dots,~$e_n$ are all the \eps-terms subordinate to~$e$.
  \end{enumerate}
\end{defn}

\begin{prop}
  If $p$ is the type of $e$, then $\rk{p} = \rk{e}$.
\end{prop}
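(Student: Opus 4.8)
The plan is to reduce the Proposition to a single, more flexible replacement lemma and to prove that lemma by induction on rank. Since $\rk{\cdot}$, the relation ``subordinate to'', and the notion of type are all invariant under $\seq$, I may first replace $e$ by a variant $\meps x{A(x)}$ in which all bound variables are pairwise distinct and distinct from the free variables. Under this normalization, two facts drop out of the definitions: every subterm of $e$ is free of $x$ and in fact contains no occurrence of $x$ at all; and every \eps-term subordinate to $e$ has $x$ among its free variables, so its witnessing occurrence inside $A$ is not contained in any subterm-occurrence of $e$, in particular not in any immediate-subterm occurrence.

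\emph{The key lemma is: if $t$ occurs as a subterm in an \eps-term $e$ and $y$ is a variable not occurring in $e$, then the expression $e^{*}$ obtained by replacing that occurrence of $t$ by $y$ is again an \eps-term, and $\rk{e^{*}} = \rk e$.} I would prove this by induction on $\rk e$. That $e^{*}$ is an \eps-term is routine: the outermost \eps{} still binds a free occurrence of its variable (as $t$ is free of $x$), and the replacement disturbs no inner binder (the occurrence of $t$ is untrapped). For the rank, I would exhibit a bijection between the \eps-terms subordinate to $e$ and those subordinate to $e^{*}$. Suppose $d$ is subordinate to $e$, witnessed by $d'' \seq d$ occurring in $e$ with $x \in \FV(d'')$. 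The occurrence of $d''$ is either disjoint from the replaced occurrence of $t$ or properly contains it --- it cannot lie strictly inside it, because $x$ occurs in $d''$ but not in $t$. In the disjoint case the same occurrence witnesses $d$ subordinate to $e^{*}$. In the containing case, the occurrence of $t$ is still a subterm-occurrence within $d''$ (being untrapped in $e$, it is untrapped in $d''$), so replacing it by $y$ gives an \eps-term $d^{*}$; since $x \neq y$, still $x \in \FV(d^{*})$, so $d^{*}$ is subordinate to $e^{*}$, and since $\rk d < \rk e$ the induction hypothesis yields $\rk{d^{*}} = \rk d$. The same analysis read backwards shows every \eps-term subordinate to $e^{*}$ arises in exactly one of these two ways. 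Hence, up to this bijection, $e$ and $e^{*}$ have the same subordinate \eps-terms with the same ranks, so $\rk{e^{*}} = \rk e$. The base case $\rk e = 1$ is precisely the case where neither term has any subordinate \eps-term.

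Granting the lemma, the Proposition follows quickly. Unwinding the definition of type (and using the preceding uniqueness Proposition), the type $p$ of $e$ may be taken to be the result of replacing each immediate-subterm occurrence of $e$, enumerated from left to right, by a fresh variable. Call these occurrences $o_1, \dots, o_n$ and the fresh variables $x_1, \dots, x_n$. Replacing $o_1$ by $x_1$ leaves $o_2, \dots, o_n$ as subterm-occurrences --- their positions are disjoint from $o_1$, so the binders above them and their free variables are unchanged --- and leaves $x_2, \dots, x_n$ still absent from the term. Iterating, $n$ applications of the lemma give $\rk p = \rk e$.

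The part I expect to be delicate is not any calculation but the occurrence-level bookkeeping needed to keep the induction honest: that a subordinate witness cannot sit inside an immediate subterm, that a subterm-occurrence of $e$ lying inside a subordinate term $d$ is still a subterm-occurrence of $d$, and that the successive replacements do not interfere. Each follows from the cleanliness normalization together with the definitions of ``subterm'' and ``free for $x$'', but a careless treatment risks creating or destroying subordinate \eps-terms through accidental capture, which is exactly what the lemma must rule out.
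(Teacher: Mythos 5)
Your argument is correct; the paper leaves this proposition as an exercise, so there is no official proof to compare against, but the route you take --- reducing to a lemma that replacing a single untrapped subterm occurrence by a fresh variable preserves rank, proved by induction on $\rk{e}$ via an occurrence-level correspondence of subordinate \eps-terms --- is exactly the right one, and your case analysis (a subordinate witness $d''$ with $x \in \FV(d'')$ cannot sit inside the $x$-free term $t$, so its occurrence is disjoint from or properly contains the replaced one) is the essential point. One small quibble that does not affect the proof: under the paper's definitions the variables of the type replace the \emph{maximal subterm occurrences} of $e$ rather than literally the occurrences of its immediate subterms (these can differ when a term such as a constant has one occurrence inside a larger subterm and another maximal occurrence), but since your lemma applies to any subterm occurrence, the iterated application goes through unchanged.
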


\begin{proof} Exercise.
\end{proof}

\section{Axioms and Proofs}

\begin{defn} The axioms of the \emph{elementary calculus}~\EC{} are
\begin{align}
& A & \text{for any tautology~$A$} \tag{Taut} \\
t & = t & \text{for any term~$t$} \tag{$=_1$} \\
t = u & \lif (\st A x t \liff \st A x u) \tag{$=_2$}
\end{align}
and its only rule of inference is
\[\infer[$MP$]{A}{A & A \lif B}\]
The axioms and rules of the (intensional) \emph{\eps-calculus}~\ECe{}
are those of~\EC{} plus the \emph{critical formulas}
\begin{align}
A(t) \lif A(\meps x {A(x)}). \tag{crit}
\end{align}
The axioms and rules of the \emph{extensional \eps-calculus}~$\ECe^\ext$
are those of~\ECe{} plus
\begin{align}
  (\forall x(A(x) \liff B(x)))^\eps & \lif \meps x {A(x)} = \meps x
  {B(x)} \tag{ext} \\
  & \text{that is,}\notag\\
  A(\meps x{\lnot(A(x) \liff B(x))}) \liff B(\meps x{\lnot(A(x) \liff
    B(x))}) & \lif \meps x {A(x)} = \meps x {B(x)} \notag
\end{align}
The axioms and rules of \PC, \PCe, $\PCe^\ext$ are those of \EC, \ECe,
$\ECe^\ext$, respectively, together with the axioms
\begin{align}
A(t) & \lif \exists x\, A(x)  \tag{Ax$\exists$} \\
\forall x\, A(x) & \lif A(t) \tag{Ax$\forall$}
\end{align}
and the rules
\[
\infer[R\exists]{\exists x\, A(x) \lif B}{A(x) \lif B} \qquad
\infer[R\forall]{B \lif \forall x\, A(x)}{B \lif A(x)} 
\]
Applications of these rules must satisfy the \emph{eigenvariable
  condition}, viz., the variable $x$ must not appear in the conclusion
or anywhere below it in the proof.
\end{defn}

\begin{defn}
  If $\Gamma$ is a set of formulas, a \emph{proof of $A$ from $\Gamma$
    in $\PCe^\ext$} is a sequence~$\pi$ of formulas $A_1$, \dots, $A_n = A$
  where for each $i \le n$, one of the following holds:
  \begin{enumerate}
  \item $A_i \in \Gamma$.
  \item $A_i$ is an instance of an axiom.
  \item $A_i$ follows from some $A_k$, $A_l$ ($k$, $l < i$) by (MP),
    i.e., $A_i \seq C$, $A_k \seq B$, and $A_l \seq B \lif C$.
  \item $A_i$ follows from some $A_j$ ($j < i$) by (R$\exists$), i.e.,
    i.e., $A_i \equiv \exists x\,B(x) \lif C$, $A_j \equiv B(x) \lif
    C$, and $x$ is an eigenvariable, i.e., it satisfies $x \notin
    \FV(A_k)$ for any $k \ge i$ (this includes $k = i$, so $x \notin
    \FV(C))$.
  \item $A_i$ follows from some $A_j$ ($j < i$) by (R$\forall$), i.e.,
    i.e., $A_i \equiv C \lif \forall x\,B(x)$, $A_j \equiv C \lif
    B(x)$, and the eigenvariable condition is satisfied.
  \end{enumerate}
  If $\pi$ only uses the axioms and rules of \EC, \ECe, $\ECe^\ext$,
  etc., then it is a proof of $A$ from $\Gamma$ in \EC, \ECe,
  $\ECe^\ext$, etc., and we write $\Gamma \proves[\pi]{} A$, $\Gamma
  \proves[\pi]{\eps} A$, $\Gamma \proves[\pi]{\eps\ext} A$, etc. 

  We say that $A$ is provable from $\Gamma$ in \EC, etc. ($\Gamma
  \proves{} A$, etc.), if there is a proof of $A$ from $\Gamma$ in
  \EC, etc.
\end{defn}

Note that our definition of proof, because of its use of $\seq$,
includes a tacit rule for renaming bound variables.  Note also that
substitution into members of~$\Gamma$ is \emph{not} permitted.
However, we can simulate a provability relation in which substitution
into members of $\Gamma$ is allowed by considering $\inst \Gamma$, the
set of all substitution instances of members of~$\Gamma$.  If $\Gamma$
is a set of sentences, then $\inst\Gamma = \Gamma$.

\begin{prop}\label{proof-subst}
  If $\pi = A_1$, \dots, $A_n \equiv A$ is a proof of $A$ from
  $\Gamma$ and $x \notin \FV(\Gamma)$ is not an eigenvariable in
  $\pi$, then $\st \pi x t = \st {A_1} x t$, \dots, $\st {A_n} x t$ is
  a proof of $\st A x t$ from $\inst \Gamma$.

  In particular, if $\Gamma$ is a set of sentences and $\pi$ is a
  proof in \EC, \ECe, or \ECex, then $\st \pi x t$ is a proof of $\st
  A x t$ from $\Gamma$ in \EC, \ECe, or \ECex
\end{prop}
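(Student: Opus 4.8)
The plan is to argue by induction on the length of $\pi$: concretely, to check that whenever $\st{A_1}xt, \dots, \st{A_{i-1}}xt$ is a proof from $\inst\Gamma$, appending $\st{A_i}xt$ keeps it one, by cases on how $A_i$ is justified in $\pi$. Before starting the cases I would make one harmless reduction. An eigenvariable $y$ of $\pi$ never occurs free in the end-formula $A \seq A_n$ (since $n$ is at or below the rule application introducing $y$), and, more generally, replacing such a $y$ by a fresh variable throughout $\pi$ again yields a proof of $A$ --- from $\inst\Gamma$, since a member of $\Gamma$ in which $y$ happens to occur free becomes another instance of itself. Iterating over all eigenvariables, I may assume that \emph{no eigenvariable of $\pi$ lies in $\FV(t)$} (it is given that none is $x$). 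This preliminary renaming is the only place the weakening from $\Gamma$ to $\inst\Gamma$ in the statement is actually needed.

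Now the case analysis on the justification of $A_i$. If $A_i \in \Gamma$, then $x \notin \FV(\Gamma)$ gives $\st{A_i}xt \seq A_i \in \inst\Gamma$. If $A_i$ is an instance of an axiom, I would check that each schema --- (Taut), $(=_1)$, $(=_2)$, (crit), (ext), (Ax$\exists$), (Ax$\forall$) --- is closed under substitution of a term for a variable: for tautologies because a substitution instance of a propositional tautology is again one, and for the remaining schemas because pushing $\st\cdot xt$ inward, using the standard lemma on composing substitutions together with renaming of bound variables to keep each substitution admissible, produces a formula of exactly the same schematic shape; hence $\st{A_i}xt$ is again an axiom instance. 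If $A_i \seq C$ follows by (MP) from $A_k \seq B$ and $A_l \seq B \lif C$, then, since substitution commutes with $\lif$ up to $\seq$, we have $\st{A_l}xt \seq \st Bxt \lif \st Cxt$, and (MP) still gives $\st{A_i}xt \seq \st Cxt$. Finally, if $A_i \seq \exists y\,B(y) \lif C$ follows by (R$\exists$) from $A_j \seq B(y) \lif C$ with eigenvariable $y$ (the (R$\forall$) case is symmetric), then by the reduction $y \notin \FV(t) \cup \{x\}$, so $y$ stays free in $\st{B(y)}xt$ and absent from $\st Cxt$, and $\st{A_i}xt \seq \exists y\,\st{B(y)}xt \lif \st Cxt$ follows from $\st{A_j}xt$ by (R$\exists$) with the \emph{same} eigenvariable $y$; the eigenvariable condition survives because $y \notin \FV(A_m)$ for $m \ge i$ and, as $y \notin \FV(t)$, substitution introduces no free $y$ into any such line.

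The ``in particular'' clause then needs no further work: a set of sentences $\Gamma$ has $\FV(\Gamma) = \emptyset$, so $x \notin \FV(\Gamma)$ holds automatically and $\inst\Gamma = \Gamma$, while \EC, \ECe, and \ECex{} have no quantifier rules, hence no eigenvariables, so the last case never arises and no renaming is required. I expect the one genuinely delicate point to be that last case: a careless substitution can pull a variable of $t$ that happens to be an eigenvariable of $\pi$ into a line \emph{below} the rule that introduced it, violating the eigenvariable condition --- and the preliminary renaming of eigenvariables is precisely the device that forestalls this. The axiom case is routine but does rely on the substitution-composition lemma, so it is not entirely without content.
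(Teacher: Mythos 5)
The paper offers no proof here---Proposition~\ref{proof-subst} is left as an exercise---so there is nothing to compare against; judged on its own, your argument is the natural induction on the length of $\pi$ and each case is handled correctly, including the genuinely delicate one (a variable of $t$ colliding with an eigenvariable). One remark worth making explicit: your preliminary renaming of eigenvariables means you actually prove that \emph{some} proof of $\st A x t$ from $\inst\Gamma$ exists, not that the literal sequence $\st{A_1}xt, \dots, \st{A_n}xt$ is one; but the literal claim is false when $\FV(t)$ contains an eigenvariable of $\pi$ (the substituted rule premise would then violate the eigenvariable condition), so the statement needs either your repair or the added hypothesis that no variable of $t$ is an eigenvariable---a mistake of the kind the paper's first exercise invites you to find. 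You are also right that under the hypothesis $x \notin \FV(\Gamma)$ the passage to $\inst\Gamma$ is needed only for that renaming step, and that the ``in particular'' clause is immediate since the quantifier-free calculi have no eigenvariables.
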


\begin{proof}
Exercise.
\end{proof}

\begin{lem}\label{ded-lemma}
  If $\pi$ is a proof of $B$ from $\Gamma \cup \{A\}$, then there is a
  proof $\pi[A]$ of $A \lif B$ from $\Gamma$, provided $A$ 
  contains no eigenvariables of~$\pi$ free.
\end{lem}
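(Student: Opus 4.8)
The plan is to prove the lemma by induction on the length of the proof $\pi = A_1, \dots, A_n \seq B$, transforming $\pi$ line by line: to each $A_i$ I will associate a block $\pi_i$ of formulas ending in $A \lif A_i$, and take $\pi[A]$ to be the concatenation $\pi_1, \dots, \pi_n$, whose last formula is then $A \lif A_n \seq A \lif B$. Since all tautologies are axioms of \EC{} and the calculus is closed under (MP), I will freely insert into a block any formula that follows propositionally from formulas already available, together with the (Taut)-instances needed to justify it.

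For the case analysis on the justification of $A_i$: if $A_i \seq A$, let $\pi_i$ be the single line $A \lif A$, a tautology. If $A_i$ is a member of $\Gamma$ or an instance of an axiom, let $\pi_i$ consist of $A_i$ itself, the tautology $A_i \lif (A \lif A_i)$, and $A \lif A_i$ by (MP). If $A_i$ comes by (MP) from earlier $A_k \seq C$ and $A_l \seq C \lif A_i$, the induction hypothesis has already made $A \lif C$ and $A \lif (C \lif A_i)$ available, so $\pi_i$ appends the tautology $(A \lif C) \lif ((A \lif (C \lif A_i)) \lif (A \lif A_i))$ and two (MP) steps. The quantifier rules are where the hypothesis on $A$ is used: if $A_i \seq \exists x\, C(x) \lif D$ comes by (R$\exists$) from $A_j \seq C(x) \lif D$ with eigenvariable $x$, then the definition of proof gives $x \notin \FV(D)$ and the hypothesis gives $x \notin \FV(A)$; starting from the available $A \lif (C(x) \lif D)$, pass propositionally to $C(x) \lif (A \lif D)$, apply (R$\exists$) with the same eigenvariable $x$ (legitimate since $x \notin \FV(A) \cup \FV(D)$) to get $\exists x\, C(x) \lif (A \lif D)$, and pass propositionally to $A \lif (\exists x\, C(x) \lif D) \seq A \lif A_i$. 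The case of (R$\forall$) is symmetric, routing through $(A \land D) \lif C(x)$.

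The part needing the most care — and the main obstacle — is verifying that the concatenated $\pi[A]$ still meets the eigenvariable condition globally. Every rule application in $\pi[A]$ carries over an eigenvariable $x$ from a rule application of $\pi$; although $x \notin \FV(A)$ by hypothesis and $x$ is absent from $A_k$ for every line $A_k$ of $\pi$ at or below that application, a later block $\pi_k$ built by (MP) can still mention an earlier line of $\pi$ or a member of $\Gamma$ in which $x$ occurs free, which would place $x$ illegitimately below its point of use. I would forestall this by a preliminary normalization: using Proposition~\ref{proof-subst}, rename the eigenvariables of $\pi$ to distinct fresh variables occurring neither in $\Gamma$ nor in $A$ (clean, e.g., when $\Gamma$ is a set of sentences), so that afterwards the only free variables appearing in any block $\pi_k$ come from $A$, from $A_k$, or from (MP)-premises that are no longer eigenvariables. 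With this normalization in place the eigenvariable condition is preserved across the concatenation and the remaining verification is routine.
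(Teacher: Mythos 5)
Your main construction follows the paper's own proof almost line for line: the same block-by-block translation of $A_i$ into $A \lif A_i$, the same tautology-plus-(MP) treatment of hypotheses and axioms, and the same detour for (R$\exists$) (passing to $C(x) \lif (A \lif D)$, applying the rule, and passing back); your (R$\forall$) variant through $(A \land D) \lif C(x)$ is an acceptable alternative to the one the paper leaves as an exercise, and your (MP) tautology is in fact the correct one where the paper's has a slip. You have also put your finger on a genuine gap that the paper's proof passes over in silence: checking the eigenvariable condition only against the new lines of block~$i$ is not enough, because a later block $m > i$ built for an (MP) step may re-cite a premise $A_k$ with $k < i$ in which the eigenvariable $x$ of block~$i$ occurs free --- the definition of proof only forbids $x \in \FV(A_k)$ for $k \ge i$ --- and the auxiliary tautologies of block~$m$ then place $x$ free below the (R$\exists$) application.

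The repair you propose does not, however, close this gap. Proposition~\ref{proof-subst} explicitly assumes the substituted variable is \emph{not} an eigenvariable of $\pi$, so it cannot be cited to rename eigenvariables; and even granting a renaming lemma, the global substitution $\st{\pi}{x}{y}$ carries the offending free occurrences of $x$ in the early lines $A_k$ over to $y$, which is now the eigenvariable of the renamed inference, so the conflict reappears verbatim with $y$ in place of $x$. Your claim that after renaming ``the only free variables appearing in any block come from $A$, from $A_k$, or from (MP)-premises that are no longer eigenvariables'' is therefore false for this normalization. What is actually needed is a regularization of $\pi$ itself: for instance, unravel the linear proof into a tree (duplicating shared subderivations), give each (R$\exists$)/(R$\forall$) application its own fresh eigenvariable occurring only in the subderivation of its premise, and relinearize. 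Then no inference occurring at or after an eigenvariable application cites a premise in which that eigenvariable is free, and the concatenated $\pi[A]$ does satisfy the eigenvariable condition. This regularization is a separate lemma that has to be stated and proved; without it, both your argument and the paper's are incomplete at exactly this point.
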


\begin{proof}
Construct $\pi[A]_0 = \emptyset$. Let $\pi_{i+1}[A] = \pi_{i}[A]$ plus
additional formulas, depending on $A_i$:
\begin{enumerate}
\item If $A_i \in \Gamma$, add $A \lif A$, if $A_i \equiv A$, or else
  add $A_i$, the tautology $A_i \lif (A \lif A_i)$, and $A \lif A_i$.
  The last formula follows from the previous two by (MP).
\item If $A_i$ is a tautology, add $A \lif A_i$, which is also a tautology.
\item If $A_i$ follows from $A_k$ and $A_l$ by (MP), i.e., $A_i \equiv
  C$, $A_k \equiv B$ and $A_l \equiv B \lif C$, then $\pi[A]_i$
  contains $A \lif B$ and $A \lif (B \lif C)$.  Add the tautology $(A
  \lif B) \lif ((B \lif C) \lif (A \lif C)$ and $A \lif C$. The latter
  follows from the former by two applications of (MP).
\item If $A_i$ follows from $A_j$ by (R$\exists$), i.e., $A_i \equiv
  \exists x\, B(x) \lif C$ and $A_j \equiv B(x) \lif C$, then
  $\pi[A]_i$ contains $A \lif (B(x) \lif C)$. $\pi[A]_{i+1}$ is 
\begin{align*}
  & \pi[A]_i \\
  & (A \lif (B(x) \lif C)) \lif (B(x) \lif (A \lif C)) & \text{(taut)} \\
  & B(x) \lif (A \lif C) & \text{(MP)} \\
  & \exists x\, B(x) \lif (A \lif C) & \text{(R$\exists$)} \\
  & (\exists x\, B(x) \lif (A \lif C)) \lif
  (A \lif (\exists x\, B(x) \lif C)) & \text{(taut)} \\
  & A \lif (\exists x\, B(x) \lif C) & \text{(MP)}
\end{align*}
Since $x \notin \FV(A)$, the eigenvariable condition is satisfied.
\item Exercise: $A_i$ follows by (R$\forall$).
\end{enumerate}
Now take $\pi[A] = \pi[A]_i$.
\end{proof}

\begin{thm}[Deduction Theorem]\label{deduction-thm}
  If $\Sigma \cup \{A\}$ is a set of sentences, $\Sigma \proves{} A \lif B$
  iff $\Sigma \cup \{A\} \proves{} B$.
\end{thm}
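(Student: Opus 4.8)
The plan is to prove the two implications separately; the substantive work is already done in Lemma~\ref{ded-lemma}, so each direction is short.

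For the implication from right to left, suppose $\pi$ is a proof of $B$ from $\Sigma \cup \{A\}$. I would simply apply Lemma~\ref{ded-lemma} to produce a proof $\pi[A]$ of $A \lif B$ from $\Sigma$, giving $\Sigma \proves{} A \lif B$. The lemma, however, carries the proviso that $A$ contain no eigenvariable of $\pi$ free, and this is exactly where the hypothesis that $\Sigma \cup \{A\}$ is a set of sentences is used: since $A$ is a sentence, $\FV(A) = \emptyset$, so the proviso holds vacuously. (In \EC\ there are no rules with eigenvariable conditions at all, so the proviso is automatic; phrasing the argument through the sentence hypothesis, though, makes it carry over verbatim to \PC, \PCe, and $\PCe^\ext$.)

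For the implication from left to right, suppose $\pi = A_1, \dots, A_n$ with $A_n \seq A \lif B$ is a proof of $A \lif B$ from $\Sigma$. I would append two formulas to $\pi$: first $A$, justified because $A \in \Sigma \cup \{A\}$; then $B$, justified by (MP) from $A$ and $A_n$. This is a proof of $B$ from $\Sigma \cup \{A\}$, provided no eigenvariable condition is violated. But by the definition of proof, any eigenvariable $x$ appearing in $\pi$ does not occur free in the last line $A_n \seq A \lif B$, hence $x \notin \FV(A)$ and $x \notin \FV(B)$; and the two appended lines are an instance of a premise and an application of (MP), which impose no eigenvariable obligations of their own. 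Hence $\Sigma \cup \{A\} \proves{} B$.

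I do not expect a genuine obstacle here, precisely because Lemma~\ref{ded-lemma} carries the weight. The two points that need attention are bookkeeping rather than ideas: recognizing that the sentence hypothesis is exactly what discharges the side condition of Lemma~\ref{ded-lemma} --- without it the theorem genuinely fails, since a free variable of $A$ could be an eigenvariable of $\pi$ --- and, in the easy direction, checking that tacking formulas onto the end of a proof cannot disturb an eigenvariable condition.
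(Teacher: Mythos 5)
Your proof is correct and follows exactly the route the paper intends: the nontrivial direction is discharged by Lemma~\ref{ded-lemma}, with the sentence hypothesis ensuring the eigenvariable proviso holds vacuously, and the converse direction is the routine appending of $A$ and an application of (MP), with the observation that eigenvariables of $\pi$ cannot occur free in $A \lif B$ and hence not in the appended lines. The paper leaves this argument implicit, and your write-up supplies precisely the missing bookkeeping.
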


\begin{cor}\label{incons}
  If $\Sigma \cup \{A\}$ is a set of sentences, $\Sigma \proves{} A$
  iff $\Sigma \cup \{\lnot A\} \proves{} \bot$.
\end{cor}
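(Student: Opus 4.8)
The plan is to read this off the Deduction Theorem (Theorem~\ref{deduction-thm}) together with two purely propositional facts, using also that provability is monotone in the set of premises: any proof from $\Gamma$ is literally a proof from any $\Gamma' \supseteq \Gamma$.

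For the left-to-right direction, I would assume $\Sigma \proves{} A$. By monotonicity $\Sigma \cup \{\lnot A\} \proves{} A$, and $\Sigma \cup \{\lnot A\} \proves{} \lnot A$ holds trivially since $\lnot A$ is a premise. Now $A \lif (\lnot A \lif \bot)$ is a tautology, hence an instance of (Taut), so two applications of (MP) yield $\Sigma \cup \{\lnot A\} \proves{} \bot$.

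For the converse, I would first check that the side condition of the Deduction Theorem is met: since $\Sigma \cup \{A\}$ is a set of sentences and $\lnot A$ has exactly the free variables of $A$ (namely none), the set $\Sigma \cup \{\lnot A\}$ is also a set of sentences. Hence from $\Sigma \cup \{\lnot A\} \proves{} \bot$ the Deduction Theorem gives $\Sigma \proves{} \lnot A \lif \bot$. Since $(\lnot A \lif \bot) \lif A$ is a tautology, hence an instance of (Taut), one application of (MP) delivers $\Sigma \proves{} A$.

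There is no genuine obstacle here: the argument is entirely a matter of invoking the Deduction Theorem in the right direction and discharging the two tautologies via (Taut) and (MP). The only point that needs a word is the sentencehood of $\lnot A$, which is immediate from the hypothesis.
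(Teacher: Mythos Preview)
Your argument is correct and is precisely the intended derivation: the paper states this as an immediate corollary of the Deduction Theorem without further proof, and your use of Theorem~\ref{deduction-thm} together with the tautologies $A \lif (\lnot A \lif \bot)$ and $(\lnot A \lif \bot) \lif A$ (via (Taut) and (MP)) fills in exactly the expected details.
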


\begin{lem}[\eps-Embedding Lemma]
  If $\Gamma \proves[\pi]{\eps\forall} A$, then there is a proof
  $\pi^\eps$ so that $\inst{{\Gamma^\eps}} \proves[\pi^\eps]{\eps}
  A^\eps$
\end{lem}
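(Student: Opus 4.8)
The plan is to argue by induction on the length of the \PCe-proof $\pi = A_1, \dots, A_n \seq A$, producing for each $i$ an \ECe-proof $\pi^\eps_i$ witnessing $\inst{{\Gamma^\eps}} \proves{\eps} A_i^\eps$; splicing the $\pi^\eps_i$ together (using fresh copies where a later step substitutes into an earlier derivation, and pruning repetitions) then yields $\pi^\eps$. Before the induction I would record two routine facts, each established by induction on expressions: (i)~the \eps-translation commutes with substitution, $(\st E x t)^\eps \seq \st{E^\eps}{x}{t^\eps}$, so in particular $(\meps x{A(x)})^\eps \seq \meps x{A^\eps(x)}$, $(\exists x\,A(x))^\eps \seq A^\eps(\meps x{A^\eps(x)})$, and $(\forall x\,A(x))^\eps \seq A^\eps(\meps x{\lnot A^\eps(x)})$; and (ii)~$\FV(E^\eps) = \FV(E)$, so that \eps-terms remain legitimate after translation and, the translation being the identity on propositional structure, tautologies translate to tautologies.

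Most cases are then routine. If $A_i \in \Gamma$ then $A_i^\eps \in \Gamma^\eps \subseteq \inst{{\Gamma^\eps}}$; if $A_i$ is a tautology so is $A_i^\eps$; if $A_i$ is $t = t$ then $A_i^\eps$ is $t^\eps = t^\eps$, an instance of $(=_1)$; if $A_i$ is $t = u \lif (A(t) \liff A(u))$ then by~(i) $A_i^\eps$ is $t^\eps = u^\eps \lif (A^\eps(t^\eps) \liff A^\eps(u^\eps))$, an instance of $(=_2)$; if $A_i$ is a critical formula $A(t) \lif A(\meps x{A(x)})$ then by~(i) $A_i^\eps$ is $A^\eps(t^\eps) \lif A^\eps(\meps x{A^\eps(x)})$, again a critical formula (for $A^\eps(x)$ and $t^\eps$); and if $A_i$ comes by (MP) from $A_k \seq B$ and $A_l \seq B \lif C$ then $A_i^\eps \seq C^\eps$ comes by (MP) from $A_k^\eps \seq B^\eps$ and $A_l^\eps \seq B^\eps \lif C^\eps$, both furnished by the induction hypothesis. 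The quantifier \emph{axioms} are only slightly more work: $(A(t) \lif \exists x\,A(x))^\eps$ is $A^\eps(t^\eps) \lif A^\eps(\meps x{A^\eps(x)})$, which is itself a critical formula of \ECe, while $(\forall x\,A(x) \lif A(t))^\eps$ is $A^\eps(\meps x{\lnot A^\eps(x)}) \lif A^\eps(t^\eps)$, which follows by (MP) from the critical formula $\lnot A^\eps(t^\eps) \lif \lnot A^\eps(\meps x{\lnot A^\eps(x)})$ (for $\lnot A^\eps(x)$ and $t^\eps$) together with a propositional tautology.

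The quantifier \emph{rules} are the heart of the matter. Suppose $A_i \seq \exists x\,B(x) \lif C$ comes from $A_j \seq B(x) \lif C$ by (R$\exists$) with eigenvariable $x$, so that $x \notin \FV(C)$. By the induction hypothesis there is an \ECe-proof $\sigma$ of $B^\eps(x) \lif C^\eps$ from $\inst{{\Gamma^\eps}}$. Setting $e \seq \meps x{B^\eps(x)}$ --- a legitimate term by~(ii) --- I would substitute $e$ for $x$ throughout $\sigma$. Since $\sigma$ is quantifier-free it has no eigenvariables, so by Proposition~\ref{proof-subst} the result $\st\sigma x e$ is again a proof from $\inst{{\Gamma^\eps}}$ (a substitution instance of an instance of some $\gamma^\eps$ is still an instance of $\gamma^\eps$), and since $x \notin \FV(C^\eps) = \FV(C)$ its endformula is $B^\eps(e) \lif C^\eps$, which by~(i) is exactly $(\exists x\,B(x))^\eps \lif C^\eps = A_i^\eps$. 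The (R$\forall$) case is symmetric: from the induction hypothesis's proof of $C^\eps \lif B^\eps(x)$ substitute $\meps x{\lnot B^\eps(x)}$ for $x$ to obtain $C^\eps \lif B^\eps(\meps x{\lnot B^\eps(x)}) = (C \lif \forall x\,B(x))^\eps$.

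The main obstacle is precisely this eigenvariable step: one must see that moving from $A_j^\eps$ to $A_i^\eps$ is nothing more than substituting the correct \eps-term for the eigenvariable $x$ (which uses $x \notin \FV(C)$), and that after that substitution one still has a genuine \ECe-proof over $\inst{{\Gamma^\eps}}$. This is exactly why the conclusion must speak of $\inst{{\Gamma^\eps}}$ rather than $\Gamma^\eps$ --- substituting into a premise has to land back inside the premise set --- and why it matters that $\sigma$, being quantifier-free, carries no eigenvariable obligations that the substitution could violate. What remains is the shallow but unavoidable bookkeeping folded into~(i) and into the $\seq$-renaming convention: that $t^\eps$ is free for $x$ in $A^\eps$ whenever $t$ is free for $x$ in $A$, that $(\meps x{A(x)})^\eps$ stays well-formed, and that renaming bound variables inside $\sigma$ or inside $e$ introduces no capture.
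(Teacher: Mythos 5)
Your proof is correct; the paper itself leaves this lemma as an exercise, and your argument is the intended one: translate line by line, observe that (Ax$\exists$) and (Ax$\forall$) become critical formulas (the latter after contraposition), and handle (R$\exists$)/(R$\forall$) by substituting $\meps x{B^\eps(x)}$ resp.\ $\meps x{\lnot B^\eps(x)}$ for the eigenvariable via Proposition~\ref{proof-subst}, which is exactly where $\inst{\Gamma^\eps}$ is needed. The supporting facts you defer --- commutation of $(\cdot)^\eps$ with substitution and preservation of free variables --- are indeed routine inductions.
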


\begin{proof}
Exercise.
\end{proof}

\chapter{Semantics}

\section{Semantics for $\PCe^\ext$}

\begin{defn}
  A \emph{structure}~$\M = \langle \card \M, (\cdot)^\M\rangle$
  consists of a nonempty \emph{domain}~$\card \M \neq \emptyset$ and a
  maping $(\cdot)^\M$ on function and predicate symbols where:
\begin{align*}
(f^0_i)^\M & \in \card \M \\
(f^n_i)^M & \in \M^{\M^n} \\
(P^n_i)^\M & \subseteq \M^n
\end{align*}
\end{defn}

\begin{defn}
  An \emph{extensional choice function~$\Phi$ on $\M$} is a function
  $\Phi\colon \wp(\card\M) \to \card\M$ where $\Phi(X) \in X$ whenever
  $X \neq \emptyset$.
\end{defn}

Note that $\Phi$ is total on $\wp(\card\M)$, and so
$\Phi(\emptyset) \in \card{\M}$.

\begin{defn}
  An \emph{assignment~$s$ on $\M$} is a function $s\colon \Var \to
  \card\M$.

  If $x \in \Var$ and $m \in \card\M$, $\st s x m$ is the assignment
  defined by
\[
\st s x m(y) = \begin{cases} m & \text{if $y = x$} \\ s(y) &
  \text{otherwise}
\end{cases}
\] 
\end{defn}

\begin{defn}\label{ext-sat}
  The \emph{value~$\val \M \Phi s t$ of a term} and the
  \emph{satisfaction relation $\sat \M \Phi s A$} are defined as
  follows:
\begin{enumerate}
\item $\val \M \Phi s x =  s(x)$
\item $\sat \M \Phi s \top$ and $\M, \Phi, s \not\models \bot$
\item $\val \M \Phi s {f^n_i(t_1, \dots, t_n)} = (f^n_i)^\M(\val \M
  \Phi s {t_1}, \dots, \val \M \Phi s {t_n})$ 
\item $\sat \M \Phi s {P^n_i(t_1, \dots, t_n)}$ iff $\langle\val \M
  \Phi s {t_1}, \dots, \val \M \Phi s {t_n}\rangle \in (P^n_i)^\M$
\item\label{epsilon-sat} $\val \M \Phi s {\meps x{A(x)}} = \Phi(\val \M
  \Phi s {A(x)})$ where
\[
\val \M \Phi s {A(x)} = \{ m \in \card\M : \sat \M \Phi {\st s x m} A(x)\}
\]
\item $\sat \M \Phi s {\exists x\, A(x)}$ iff for some $m \in
  \card\M$, $\sat \M \Phi {\st s x m} {A(x)}$
\item $\sat \M \Phi s {\forall x\, A(x)}$ iff for all $m \in
  \card\M$, $\sat \M \Phi {\st s x m} {A(x)}$
\end{enumerate}
\end{defn}

\begin{prop}
  If $s(x) = s'(x)$ for all $x \notin \FV(t) \cup \FV(A)$, then $\val
  \M \Phi s t = \val \M \Phi {s'} t$ and $\sat \M \Phi s A$ iff $\sat
  \M \Phi {s'} A$.
\end{prop}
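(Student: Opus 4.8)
The plan is to prove the statement by structural induction carried out simultaneously over terms and formulas. It must be simultaneous because, by clause~\ref{epsilon-sat} of Definition~\ref{ext-sat}, the value of an \eps-term is read off from the satisfaction relation applied to a subformula, while the satisfaction of an atomic formula is read off from the values of its subterms, so neither induction closes without the other. Reading expressions as concrete parse trees, I would establish the \emph{localized} claim: for every expression $E$, if $s(x) = s'(x)$ for all $x \in \FV(E)$, then $\val{\M}{\Phi}{s}{E} = \val{\M}{\Phi}{s'}{E}$ when $E$ is a term and $\sat{\M}{\Phi}{s}{E} \iff \sat{\M}{\Phi}{s'}{E}$ when $E$ is a formula; the two conjuncts of the proposition are then the cases where $E$ is $t$ and where $E$ is $A$. (As stated the hypothesis ought to read ``$x \in \FV(t)\cup\FV(A)$'' rather than ``$x \notin \FV(t)\cup\FV(A)$'': the value of a term and the truth of a formula depend on the assignment only through its restriction to the free variables, so agreement \emph{on} those variables is what is needed --- the ``$\notin$'' version already fails for $t \seq x_0$ with $s(x_0) \neq s'(x_0)$. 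I take the corrected reading.)

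\textbf{Routine cases.} If $E$ is a variable~$x$ then $x \in \FV(E)$, so $\val{\M}{\Phi}{s}{x} = s(x) = s'(x) = \val{\M}{\Phi}{s'}{x}$; if $E$ is a constant~$f^0_i$ its value is $(f^0_i)^\M$, independent of the assignment; if $E$ is $\bot$ or $\top$ there is nothing to check. If $E$ is $f^n_i(t_1,\dots,t_n)$ or $P^n_i(t_1,\dots,t_n)$ then $\FV(E) = \FV(t_1)\cup\dots\cup\FV(t_n)$, so $s$ and $s'$ agree on each $\FV(t_j)$; the induction hypothesis gives $\val{\M}{\Phi}{s}{t_j} = \val{\M}{\Phi}{s'}{t_j}$ for all $j$, and applying $(f^n_i)^\M$ or testing membership in $(P^n_i)^\M$ on the common tuple finishes the case. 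If $E$ is $\lnot B$ or one of $(B\land C)$, $(B\lor C)$, $(B\lif C)$, $(B\liff C)$, then $\FV(E) = \FV(B)\cup\FV(C)$, so the induction hypothesis applies to the immediate subformulas and the Boolean clauses of Definition~\ref{ext-sat} conclude.

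\textbf{Binding cases.} Suppose $E$ is $\meps x{B(x)}$, so $\FV(E) = \FV(B)\setminus\{x\}$ and, by clause~\ref{epsilon-sat}, $\val{\M}{\Phi}{s}{E} = \Phi(\{m \in \card{\M} : \sat{\M}{\Phi}{\st s x m}{B}\})$, and likewise for $s'$. Fix $m \in \card{\M}$: the assignments $\st s x m$ and $\st{s'}{x}{m}$ agree at $x$ (both send it to $m$) and at every $y \neq x$ where $s$ and $s'$ agree, hence at every $y \in \FV(B)$, since $\FV(B)\setminus\{x\} = \FV(E)$ and $s,s'$ agree on $\FV(E)$. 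So $\st s x m$ and $\st{s'}{x}{m}$ agree on all of $\FV(B)$, and the induction hypothesis for $B$ gives $\sat{\M}{\Phi}{\st s x m}{B} \iff \sat{\M}{\Phi}{\st{s'}{x}{m}}{B}$. Hence the two subsets of $\card{\M}$ coincide and $\Phi$ returns the same element. The cases $E$ is $\exists x\,B(x)$ or $\forall x\,B(x)$ run the identical modified-assignment argument for each candidate witness, resp.\ instance, $m$, in place of the appeal to $\Phi$.

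\textbf{Main obstacle.} I expect the only point that needs care to be exactly this modified-assignment step: one has to see that passing from ``$s$ and $s'$ agree at every free variable other than the bound one'' to ``$\st s x m$ and $\st{s'}{x}{m}$ agree at every free variable of the body'' is precisely what lets the bound variable drop out of $\FV$; everything else is bookkeeping. A minor foundational scruple is that clause~\ref{epsilon-sat} and the quantifier clauses are phrased with the $A(x)$ notation, which presupposes that satisfaction respects $\seq$; to be fully rigorous one would establish that invariance first (or fold it into the same simultaneous induction), but it does not change the shape of the argument.
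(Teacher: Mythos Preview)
Your proposal is correct and is the standard simultaneous induction one would expect; the paper itself leaves this proposition as an exercise, so there is nothing to compare against. Your identification of the typo (the hypothesis should read $x \in \FV(t)\cup\FV(A)$, not $x \notin$) is also right, and your handling of the binding cases---checking that $\st s x m$ and $\st{s'}{x}{m}$ agree on all of $\FV(B)$---is exactly the point where the work lies.
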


\begin{proof} Exercise. \end{proof}

\begin{prop}[Substitution Lemma]
If $m = \val \M \Phi s u$, then 
  $\val \M \Phi s {t(u)} = \val \M \Phi {\st s x m} {t(x)}$ and
  $\sat \M \Phi s {A(u)}$ iff $\sat \M \Phi {\st s x m} {A(x)}$
\end{prop}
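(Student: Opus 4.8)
The plan is to prove the two statements together, by a single induction on the structure of the term $t$ and the formula $A$ (the pre-substitution expressions), quantified over \emph{all} assignments $s$ at once, so that the induction hypothesis is available at modified assignments. Before starting I would record the routine fact that $\val$ and $\sat$ are insensitive to renaming of bound variables (a short induction, which may also be folded into the main one); since $\st E x u$ is defined only up to such renaming, this lets me assume throughout that $u$ is literally free for $x$ in the expression at hand, so that substitution is honest replacement of the free occurrences of $x$.

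The base and propositional cases I expect to be immediate. If $t \seq x$, then $t(u) \seq u$ and $\val \M \Phi s u = m = \st s x m(x) = \val \M \Phi {\st s x m} x$. If $t$ is a variable $\not\seq x$ or a constant, then $x \notin \FV(t)$, so $t(u) \seq t$ and the preceding Proposition (values and satisfaction depend only on the assignment's values on free variables) gives the claim. For $t \seq f^n_i(t_1, \dots, t_n)$ and atomic $A \seq P^n_i(t_1, \dots, t_n)$ I would unfold Definition~\ref{ext-sat} and apply the induction hypothesis to each $t_j$; the connectives $\lnot, \land, \lor, \lif, \liff$ follow at once from the induction hypothesis on the immediate subformulas, and $\bot$, $\top$ are trivial.

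The real work is the $\eps$-term case, with the quantifier cases $\exists y$, $\forall y$ being the same argument. Take $t \seq \meps y B$. If $x \notin \FV(t)$ the claim is again the preceding Proposition, so assume $x \in \FV(B)$; then, since $u$ is free for $x$ in $t$ while the free occurrences of $x$ in $t$ lie inside the scope of $\eps_y$, we must have $y \notin \FV(u)$, and $y \not\seq x$ because $y$ is bound and $x$ free in $t$. Then $t(u) \seq \meps y {\st B x u}$ (up to harmless renaming of bound variables of $u$), so by Definition~\ref{ext-sat}
\[ \val \M \Phi s {t(u)} = \Phi\bigl(\{\, n \in \card\M : \sat \M \Phi {\st s y n}{\st B x u} \,\}\bigr). \]
Fix $n$. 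Since $y \notin \FV(u)$ the preceding Proposition gives $\val \M \Phi {\st s y n} u = \val \M \Phi s u = m$, so the induction hypothesis for the shorter formula $B$ at the assignment $\st s y n$ yields $\sat \M \Phi {\st s y n}{\st B x u}$ iff $\sat \M \Phi {\st {(\st s y n)} x m} B$; and $\st {(\st s y n)} x m = \st {(\st s x m)} y n$ since $x \not\seq y$. Hence
\[ \{\, n : \sat \M \Phi {\st s y n}{\st B x u} \,\} = \{\, n : \sat \M \Phi {\st {(\st s x m)} y n} B \,\} = \val \M \Phi {\st s x m} B, \]
and applying $\Phi$ gives $\val \M \Phi s {t(u)} = \val \M \Phi {\st s x m}{\meps y B} = \val \M \Phi {\st s x m}{t(x)}$. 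For $A \seq \exists y\, B$ and $A \seq \forall y\, B$ the same computation applies, with $\Phi(\{\cdots\})$ replaced by ``$\{\cdots\} \neq \emptyset$'' and ``$\{\cdots\} = \card\M$'' respectively.

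I expect the $\eps$-term/quantifier case to be the only genuine obstacle, for two linked reasons. First, unlike the classical substitution lemma, the value of $\meps y B$ depends on the \emph{whole} truth set of $B$ over $\card\M$, not merely on whether $B$ holds at $s$, so the induction cannot be run at a single assignment and must be pushed through an equality of subsets of $\card\M$ --- which is exactly why the statement must be proved for all assignments simultaneously. Second, the crucial step ``$\val \M \Phi {\st s y n} u = m$ independently of $n$'' rests on $y \notin \FV(u)$, and that is precisely what ``$u$ is free for $x$ in $t$'' provides; keeping track of this side condition (and the harmless bound-variable renaming needed when it fails) is the one place where care is actually required.
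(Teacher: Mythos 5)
Your proof is correct, and since the paper leaves this lemma as an exercise there is no official argument to diverge from: yours is the standard simultaneous induction on $t(x)$ and $A(x)$ over all assignments, with the essential points handled properly — the reduction to the case where $u$ is free for $x$ via invariance under bound-variable renaming, the extraction of $y \notin \FV(u)$ from freeness, the commutation $\st{(\st s y n)}{x}{m} = \st{(\st s x m)}{y}{n}$, and the observation that the $\eps$-case forces an equality of entire truth sets rather than a single satisfaction fact. Nothing to add.
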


\begin{proof} Exercise. \end{proof}

\begin{defn}
\begin{enumerate}
\item $A$ is \emph{locally true} in $\M$ with respect to $\Phi$ and
  $s$ iff $\sat \M \Phi s A$.
\item $A$ is \emph{true} in $\M$ with respect to $\Phi$, $\M, \Phi
  \models A$, iff for all $s$ on $\M$: $\sat \M \Phi s A$.
\item $A$ is \emph{generically true} in $\M$ with respect to~$s$, $\M,
  s \models^g A$, iff for all choice functions $\Phi$
  on~$\M$: $\sat \M \Phi s A$.
\item $A$ is \emph{generically valid} in $\M$, $\M \models A$, if for
  all choice functions $\Phi$ and assignments~$s$ on~$\M$: $\sat \M
  \Phi s A$.
\end{enumerate}
\end{defn}

\begin{defn} Let $\Gamma \cup\{A\}$ be a set of formulas.
\begin{enumerate}
\item $A$ is a \emph{local consequence} of $\Gamma$, $\Gamma
  \models^l A$, iff for all $\M$, $\Phi$, and $s$:\\
  \qquad if $\sat \M \Phi s \Gamma$ then $\sat \M \Phi s A$.
\item $A$ is a \emph{truth consequence} of $\Gamma$, $\Gamma
  \models A$, iff for all $\M$, $\Phi$:\\
  \qquad if $\M, \Phi \models \Gamma$
  then $\M, \Phi \models A$.
\item $A$ is a \emph{generic consequence} of $\Gamma$, $\Gamma
  \models^g A$, iff for all $\M$ and $s$:\\
  \qquad if $\M, s \models^g \Gamma$
  then $\M \models A$.
\item $A$ is a \emph{generic validity consequence} of $\Gamma$, $\Gamma
  \models^v A$, iff for all $\M$:\\
  \qquad if $\M \models^v \Gamma$ then $\M
  \models A$.
\end{enumerate}
\end{defn}

\begin{ex}
  What is the relationship between these consequence relations?  For
  instance, if $\Gamma \models^l A$ then $\Gamma \models A$ and
  $\Gamma \models^g A$, and if eiter $\Gamma \models A$ or $\Gamma
  \models^g A$, then $\Gamma \models^v A$.  Are these containments
  strict?  Are they identities (in general, and in cases where the
  language of $\Gamma$, $A$ is restricted, or if $\Gamma$, $A$ are
  sentences)?  For instance:
\end{ex}

\begin{prop}
  If $\Sigma \cup \{A\}$ is a set of sentences, $\Sigma \models^l A$
  iff $\Sigma \models A$
\end{prop}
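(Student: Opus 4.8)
The plan is to prove the two implications separately. Both are routine; the only substantive ingredient is the earlier proposition stating that if $s$ and $s'$ agree off $\FV(t)\cup\FV(A)$, then they assign the same value to $t$ and agree on the satisfaction of $A$. In particular, for a sentence the relation $\sat \M \Phi s A$ does not depend on $s$ at all.

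First I would do the direction $\Sigma \models^l A \Rightarrow \Sigma \models A$, which in fact needs no hypothesis on $\Sigma$ or $A$. Fix $\M$ and $\Phi$ with $\M,\Phi \models \Sigma$; by definition this means $\sat \M \Phi s \Sigma$ for every assignment $s$. For each such $s$, the assumption $\Sigma \models^l A$ yields $\sat \M \Phi s A$. Since $s$ was arbitrary, $\M,\Phi \models A$, which is what was required.

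For the converse $\Sigma \models A \Rightarrow \Sigma \models^l A$, fix $\M$, $\Phi$, and an assignment $s$ with $\sat \M \Phi s \Sigma$; I must show $\sat \M \Phi s A$. This is the only place the hypothesis is used: since each $\sigma \in \Sigma$ is a sentence, $\FV(\sigma)=\emptyset$, so by the agreement proposition $\sat \M \Phi {s'} \sigma$ for every assignment $s'$; hence $\M,\Phi \models \Sigma$. Now $\Sigma \models A$ gives $\M,\Phi \models A$, i.e.\ $\sat \M \Phi {s'} A$ for all $s'$, and in particular for the original $s$. (Note that $A$ itself need not be a sentence for this argument to go through.)

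The ``hard part'' is purely bookkeeping --- keeping track of which quantifier over assignments appears in each consequence relation, and invoking the assignment-agreement proposition in the correct direction --- so I expect no real obstacle.
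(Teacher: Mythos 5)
Your proof is correct, and since the paper leaves this proposition without a proof (it is effectively one of the exercises), there is nothing to compare against: your argument --- the easy direction holding unconditionally, and the converse reducing to the coincidence lemma applied to the sentences of $\Sigma$ --- is exactly the intended one. Your observation that only $\Sigma$, not $A$, needs to consist of sentences is a correct and worthwhile refinement.
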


\begin{prop}
  If $\Sigma \cup \{A, B\}$ is a set of sentences, $\Sigma \cup \{A\}
  \models B$ iff $\Sigma \models A \lif B$.
\end{prop}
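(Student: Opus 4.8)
The plan is to unwind the definition of the truth consequence relation $\models$ on both sides and exploit the fact that, since $A$ is a sentence, local satisfaction of $A$ at a single assignment already forces $\M, \Phi \models A$. Recall that $\Gamma \models C$ means: for every structure $\M$ and every choice function $\Phi$ on $\M$, if $\sat \M \Phi s D$ for every $D \in \Gamma$ and every assignment~$s$, then $\sat \M \Phi s C$ for every~$s$. I will also use the (standard) satisfaction clause for $\lif$: $\sat \M \Phi s {A \lif B}$ iff $\M, \Phi, s \not\models A$ or $\sat \M \Phi s B$ --- this clause, like those for the other connectives, is not spelled out in Definition~\ref{ext-sat} but is the obvious one.

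For the direction from right to left, assume $\Sigma \models A \lif B$ and fix $\M$, $\Phi$ with $\M, \Phi \models \Sigma \cup \{A\}$. Then $\M, \Phi \models \Sigma$, so $\M, \Phi \models A \lif B$ by hypothesis, i.e.\ $\sat \M \Phi s {A \lif B}$ for every~$s$; and $\M, \Phi \models A$ gives $\sat \M \Phi s A$ for every~$s$. By the clause for $\lif$, $\sat \M \Phi s B$ for every~$s$, so $\M, \Phi \models B$; hence $\Sigma \cup \{A\} \models B$.

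For the converse, assume $\Sigma \cup \{A\} \models B$, fix $\M$, $\Phi$ with $\M, \Phi \models \Sigma$, and fix an arbitrary assignment~$s$; I must show $\sat \M \Phi s {A \lif B}$. If $\M, \Phi, s \not\models A$ this is immediate from the clause for $\lif$. Otherwise $\sat \M \Phi s A$, and here is the one nonroutine point: because $A$ is a sentence we have $\FV(A) = \emptyset$, so by the proposition on agreement of assignments off the free variables, $\sat \M \Phi {s'} A$ for every~$s'$, i.e.\ $\M, \Phi \models A$. Combined with $\M, \Phi \models \Sigma$ this yields $\M, \Phi \models \Sigma \cup \{A\}$, hence $\M, \Phi \models B$ by hypothesis, so in particular $\sat \M \Phi s B$, and therefore $\sat \M \Phi s {A \lif B}$.

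The only obstacle worth flagging is precisely the asymmetry between $\sat \M \Phi s A$ and $\M, \Phi \models A$: for the truth consequence relation the naive semantic deduction theorem fails when $A$ has free variables, since $\M, \Phi \models A$ demands truth of $A$ under \emph{all} assignments while membership of $A$ in the ``hypothesis set'' at a given $\M, \Phi, s$ only delivers $\sat \M \Phi s A$. The hypothesis that $A$ is a sentence is exactly what collapses this gap and makes the argument go through; sentencehood of $\Sigma$ and $B$ is not actually needed for the proof above.
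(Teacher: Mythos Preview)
Your argument is correct: you unwind the definition of the truth consequence relation, use the satisfaction clause for $\lif$, and invoke the free-variable agreement proposition to pass from $\sat \M \Phi s A$ to $\M, \Phi \models A$ when $A$ is a sentence. The paper leaves this proposition as an exercise, so there is no proof to compare against; your write-up is exactly the intended routine verification, and your closing observation that only the sentencehood of $A$ (not of $\Sigma$ or $B$) is actually used is a nice sharpening.
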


\begin{proof} Exercise. \end{proof}

\begin{cor}
  If $\Sigma \cup \{A\}$ is a set of sentences, $\Sigma \models A$
  iff for no $\M$, $\Phi$, $\M \models \Sigma \cup \{\lnot A\}$
\end{cor}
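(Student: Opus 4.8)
The plan is to unwind the definition of the truth–consequence relation $\models$ and of truth in a structure, and to observe that everything reduces to a statement about individual pairs $(\M,\Phi)$, the implicit quantifier over assignments being harmless because $A$ (and the members of~$\Sigma$) are sentences. Concretely, I will first record that for a sentence~$C$, a structure~$\M$, and a choice function~$\Phi$ on~$\M$, the three conditions ``$\M,\Phi\models C$'', ``$\sat\M\Phi s C$ for some~$s$'', and ``$\sat\M\Phi s C$ for all~$s$'' all coincide: the equivalence of the last two is the proposition on assignments (a sentence has no free variables, so any two assignments agree on $\FV(C)=\emptyset$), and the first is equivalent to them once one notes that $\card\M\neq\emptyset$ guarantees that at least one assignment exists. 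In particular, for the sentence~$A$ this yields $\M,\Phi\models\lnot A$ iff $\M,\Phi\not\models A$.

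Granting this observation, the forward direction is a one-line contradiction argument: if $\Sigma\models A$ and some $(\M,\Phi)$ satisfied $\M,\Phi\models\Sigma\cup\{\lnot A\}$, then from $\M,\Phi\models\Sigma$ the definition of $\models$ gives $\M,\Phi\models A$, while $\M,\Phi\models\lnot A$ gives $\M,\Phi\not\models A$. The converse is equally short: assuming that no $(\M,\Phi)$ satisfies $\Sigma\cup\{\lnot A\}$, take an arbitrary $(\M,\Phi)$ with $\M,\Phi\models\Sigma$; were $\M,\Phi\not\models A$, the observation would give $\M,\Phi\models\lnot A$ and hence $\M,\Phi\models\Sigma\cup\{\lnot A\}$, contradicting the hypothesis, so $\M,\Phi\models A$; since $(\M,\Phi)$ was arbitrary, $\Sigma\models A$.

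There is essentially no obstacle here: the only step that uses anything beyond unwinding definitions is the reduction to per-$(\M,\Phi)$ statements, and that is precisely where the sentence hypothesis is needed — for an open formula $A$ one can have $\M,\Phi\not\models A$ without $\M,\Phi\models\lnot A$, and the stated equivalence would fail. (One could alternatively deduce the corollary from the two preceding propositions, using the instance $B=\bot$ of ``$\Sigma\cup\{A\}\models B$ iff $\Sigma\models A\lif B$'' together with ``$\Sigma\models^l A$ iff $\Sigma\models A$'' for sentences and the fact that $\bot$ is true in no structure; but the direct unfolding above is shorter and self-contained.)
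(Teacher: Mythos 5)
Your proof is correct. The paper leaves this corollary as an exercise, so there is no official proof to compare against; your direct unfolding is the natural argument, and you correctly isolate the one point that is not pure definition-chasing, namely that for a \emph{sentence} $A$ one has $\M,\Phi\models\lnot A$ iff $\M,\Phi\not\models A$ (via assignment-irrelevance for closed formulas plus $\card\M\neq\emptyset$), which is exactly where the hypothesis that $\Sigma\cup\{A\}$ consists of sentences is used. The alternative you mention in passing --- deriving the corollary from the preceding proposition with $B=\bot$ and the fact that $\bot$ holds in no $(\M,\Phi)$ --- is presumably the route the placement of the corollary suggests, but your self-contained version is equally valid and arguably cleaner.
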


\begin{proof} Exercise. \end{proof}

\begin{ex}
For which of the other consequence relations, if any, do these results hold?
\end{ex}

\section{Soundness for $\PCe^\ext$}

\begin{thm}
If $\Gamma \proves{\eps\forall} A$, then $\Gamma \models^l A$.
\end{thm}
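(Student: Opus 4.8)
The plan is to proceed by induction on the length of a proof $\pi$ witnessing $\Gamma \proves{\eps\forall} A$, showing at each step that the formula $A_i$ added is a local consequence of $\Gamma$, i.e.\ that $\sat \M \Phi s {A_i}$ holds whenever $\sat \M \Phi s \Gamma$, for every $\M$, $\Phi$, $s$. The base cases and inductive steps split according to the clauses in the definition of proof. The case $A_i \in \Gamma$ is immediate. For $A_i$ an instance of (Taut), I would invoke the fact that tautologies are satisfied under every $\M$, $\Phi$, $s$ (a routine check that the clauses of Definition~\ref{ext-sat} for the propositional connectives agree with the truth tables, applied to the propositional skeleton of $A_i$); note that because of how instances are defined, it suffices to prove validity for a representative $A_i' \seq A_i$ and then appeal to the earlier proposition that $\seq$-variant formulas and renaming of bound variables preserve satisfaction. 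For $(=_1)$, unfold clause~4 of Definition~\ref{ext-sat} and use that $=$ is interpreted as genuine identity on $\card\M$; for $(=_2)$, assume $\val \M \Phi s t = \val \M \Phi s u$ and use the Substitution Lemma on both sides to reduce $\st A x t$ and $\st A x u$ to the same condition on $\st s x m$.

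For the inference rules: (MP) is immediate from the clause for $\lif$ in Definition~\ref{ext-sat}. For the \eps-axioms I would treat each of the six left-hand sides in Definition~\ref{epsilon-sat}'s neighborhood. For (Ax$\exists$) and (Ax$\forall$): given $\sat \M \Phi s {A(t)}$, put $m = \val \M \Phi s t$; by the Substitution Lemma $\sat \M \Phi {\st s x m} {A(x)}$, so by clause~6 of Definition~\ref{ext-sat}, $\sat \M \Phi s {\exists x\, A(x)}$ — and dually for $\forall$. For (R$\exists$) and (R$\forall$), suppose $\sat \M \Phi s \Gamma$ and, by the induction hypothesis, $\sat \M \Phi {s'} {A(x) \lif C}$ for every $s'$ with $\sat \M \Phi {s'} \Gamma$; here the eigenvariable condition is crucial — since $x \notin \FV(\Gamma) \cup \FV(C)$, varying $s$ at $x$ preserves satisfaction of $\Gamma$ and of $C$, so from the hypothesis holding at all $\st s x m$ we may pull the quantifier through, obtaining $\sat \M \Phi s {\exists x\, A(x) \lif C}$ (and dually for $\forall$). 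This is the only place the eigenvariable restriction enters, and it must be handled carefully using the proposition on agreement of assignments off the free variables.

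The main obstacle is the critical formula (crit): one must show $\sat \M \Phi s {A(t) \lif A(\meps x{A(x)})}$. Assume $\sat \M \Phi s {A(t)}$; then with $m = \val \M \Phi s t$, the Substitution Lemma gives $m \in \val \M \Phi s {A(x)}$, so that set is nonempty, hence $\Phi(\val \M \Phi s {A(x)}) \in \val \M \Phi s {A(x)}$ by the defining property of a choice function, i.e.\ $\val \M \Phi s {\meps x{A(x)}} \in \val \M \Phi s {A(x)}$, which unwinds (again via the Substitution Lemma, now substituting the \eps-term) to $\sat \M \Phi s {A(\meps x{A(x)})}$. The subtlety here — and the reason this case is delicate rather than trivial — is that $A(x)$ may itself contain \eps-terms, including ones subordinate to $\meps x{A(x)}$, so the recursion in clause~\ref{epsilon-sat} of Definition~\ref{ext-sat} is genuinely mutual with the satisfaction relation; one should be sure the Substitution Lemma has been established in the full generality needed (substitution of an arbitrary term, including an \eps-term, for $x$) before appealing to it here. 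Finally, the extensionality axiom (ext) must be checked: assuming the \eps-translated biconditional $(\forall x(A(x)\liff B(x)))^\eps$ holds at $s$ unwinds — via the clause for $\liff$ and the Substitution Lemma — to the statement that $\val \M \Phi s {A(x)}$ and $\val \M \Phi s {B(x)}$ contain the same single witnessing element, but what is actually needed is that these two \emph{sets} coincide, whence $\Phi$ returns the same value; here I expect to need that the hypothesis, properly read, forces $\val \M \Phi s {A(x)} = \val \M \Phi s {B(x)}$ as sets (not merely that one specified element lies in both), so some care with the exact form of the \eps-translation of $\forall x(A(x)\liff B(x))$ is required, and this may be the second genuinely delicate point after (crit).
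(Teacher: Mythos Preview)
Your approach is essentially the same as the paper's: induction on the length of~$\pi$, case analysis on the justification of each line, with the critical formula handled via the Substitution Lemma and the choice-function property, and the quantifier rules handled by varying the assignment at the eigenvariable. The paper leaves (ext), (R$\exists$), and (R$\forall$) as exercises, so your sketch of those cases is consistent with what is expected.

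There is one small but genuine gap. In your treatment of (R$\exists$) you write ``since $x \notin \FV(\Gamma) \cup \FV(C)$'', but the eigenvariable condition as defined in the paper only guarantees $x \notin \FV(A_k)$ for $k \ge i$; nothing prevents a formula of~$\Gamma$ occurring at some position $j < i$ from containing~$x$ free. Your argument that $\st s x m$ still satisfies~$\Gamma$ therefore does not go through as stated. The paper fixes this at the outset by a preliminary renaming step: one may assume without loss of generality that no eigenvariable of~$\pi$ lies in $\FV(\Gamma)$, replacing~$\pi$ by $\st \pi x y$ for a fresh~$y$ if necessary (cf.\ Proposition~\ref{proof-subst}). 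Once that is said, your argument for the quantifier rules is exactly right. Apart from this, the paper phrases the induction hypothesis slightly differently---fixing a single $\M$, $\Phi$, $s$ with $\sat \M \Phi s \Gamma$ and showing $\sat \M \Phi {s'} {A_i}$ for all $s'$ agreeing with~$s$ on $\FV(\Gamma)$---but this is equivalent to your formulation via the proposition on agreement of assignments outside the free variables.
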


\begin{proof}
  Suppose $\sat\Gamma\Phi s \Gamma$.  We show by induction on the
  length $n$ of a proof $\pi$ that $\sat \M \Phi s' A$ for all $s'$
  which agree with $s$ on $\FV(\Gamma)$.  We may assume that no
  eigenvariable~$x$ of $\pi$ is in $\FV(\Gamma)$ (if it is, let $y
  \notin \FV(\pi)$ and not occurring in~$\pi$; consider $\st \pi x y$
  instead of $pi$).

  If $n = 0$ there's nothing to prove. Otherwise, we distinguish cases
  according to the last line $A_n$ in $\pi$:
\begin{enumerate}
\item $A_n \in \Gamma$. The claim holds by assumption.
\item $A_n$ is a tautology. Obvious.
\item $A_n$ is an identity axiom. Obvious.
\item $A_n$ is a critical formula, i.e., $A_n \equiv A(t) \lif A(\meps
  x {A(x)})$. Then either $\sat \M \Phi s {A(t)}$ or not (in which
  case there's nothing to prove). If yes, $\sat\M \Phi {s[x/m]} {A(x)}$
  for $m = \val \M \Phi s {t}$, and so $Y = \val \M \Phi s {A(x)} \neq
  \emptyset$.  Consequently, $\Phi(Y) \in Y$, and hence $\sat \M \Phi
  s {A(\meps x {A(x)})}$.
\item $A_n$ is an extensionality axiom. Exercise.
\item $A_n$ follows from $B$ and $B \lif C$ by (MP). By induction
  hypothesis, $\sat\M\Phi s B$ and $\sat\M\Phi s{B \lif C}$.
\item $A$ follows from $B(x) \lif C$ by (R$\exists$), and $x$
  satisfies the eigenvariable condition. Exercise.
\item $A$ follows from $C \lif B(x)$ by (R$\forall$), and $x$
  satisfies the eigenvariable condition. Exercise.
\end{enumerate}
\end{proof}

\begin{ex}
  Complete the missing cases.
\end{ex}

\section{Completeness for $\PCe^\ext$}

\begin{lem}
  If $\Gamma$ is a set of sentences in $\Le$ and $\Gamma
  \nproves{\eps} \bot$, then there are $\M$, $\Phi$ so that $\M,
  \Phi \models \Gamma$.
\end{lem}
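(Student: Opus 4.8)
The plan is a Henkin-style argument: extend $\Gamma$ to a maximal $\ECe$-consistent set $\Gamma^*$ and read a term model off of $\Gamma^*$, letting the critical formulas play the role that Henkin witnesses play in ordinary first-order completeness. Since $\Le$ has no quantifiers, no auxiliary constants are needed --- each \eps-term is its own witness --- and the domain of the model will consist of equivalence classes of closed terms of $\Le$, of which there is at least one, namely $\meps x{(x=x)}$.

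First I would construct $\Gamma^*$ by the usual Lindenbaum enumeration of the sentences of $\Le$, adding each sentence or its negation so that the set never proves $\bot$ in $\ECe$; by the Deduction Theorem and Corollary~\ref{incons} the resulting $\Gamma^*$ is $\ECe$-consistent, deductively closed under $\ECe$, and negation-complete. In particular $\Gamma^*$ is closed under (MP) and contains every tautology and all closed instances of $(=_1)$, $(=_2)$, and of the critical formulas $A(t)\lif A(\meps x{A(x)})$. Next, let $T$ be the set of closed terms of $\Le$ and put $t\sim u$ iff $(t=u)\in\Gamma^*$; by $(=_1)$ and $(=_2)$ this is a congruence with respect to every function and predicate symbol, so I take $\card\M = T/{\sim}$, interpret $f^n_i$ canonically by $[f^n_i(t_1,\dots,t_n)]$ (constants by $[f^0_i]$), set $\langle[t_1],\dots,[t_n]\rangle\in(P^n_i)^\M$ iff $P^n_i(t_1,\dots,t_n)\in\Gamma^*$, and let $=$ be real identity on $\card\M$. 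For the choice function, write $E(\meps x{A(x)}) = \{[t] : A(t)\in\Gamma^*\}$ for closed \eps-terms, and define $\Phi$ so that $\Phi(X) = [e]$ whenever $X = E(e)$ for some closed \eps-term $e$ --- say the $\prec$-least such $e$ for a fixed well-ordering $\prec$ of \eps-terms --- and $\Phi(X)$ is an arbitrary element of $X$ (or of $\card\M$ if $X=\emptyset$) otherwise.

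The heart of the argument is then a Truth Lemma: for every closed term $t$, $\val\M\Phi s t = [t]$, and for every sentence $C$, $\sat\M\Phi s C$ iff $C\in\Gamma^*$. I would prove the two statements simultaneously, by induction stratified by the \eps-degree and \eps-rank defined above and, within that, by formula complexity, using the Substitution Lemma and the negation-completeness of $\Gamma^*$ for the atomic and propositional cases. The essential case is a closed \eps-term $e=\meps x{A(x)}$: the inductive hypothesis applied to the lower-ranked \eps-terms occurring in $A$, together with the Substitution Lemma, shows that $\val\M\Phi s{A(x)} = E(e)$; the critical formula for $A$ gives $[e]\in E(e)$ whenever $E(e)\neq\emptyset$, so $\val\M\Phi s e = \Phi(E(e))\in E(e)$, and one checks this value is $[e]$ itself. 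Granting the Truth Lemma, $\Gamma\subseteq\Gamma^*$ yields $\sat\M\Phi s\Gamma$ for every assignment $s$, i.e., $\M,\Phi\models\Gamma$, as required.

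The step I expect to be the main obstacle is making $\Phi$ do exactly what the Truth Lemma demands. Because $\Phi$ is a function of a set $X\subseteq\card\M$ alone, two syntactically distinct closed \eps-terms with the same extension in $\M$ can be assigned the same value by $\Phi$, whereas the Truth Lemma requires each \eps-term to evaluate to its own class $[\meps x{A(x)}]$; one therefore has to argue that in a maximal $\ECe$-consistent set no such clash actually arises --- equivalently, to choose the quotient $\sim$ on $\card\M$ and the function $\Phi$ compatibly --- and then push the simultaneous induction cleanly through arbitrarily deep nesting and subordination of \eps-terms. This interaction between $\sim$, $\Phi$, and the critical formulas --- which for $\ECex$ would instead be handled by the (ext) axioms --- is where essentially all the real work of the proof lies; the remaining cases of the Truth Lemma and the verification that $\langle\M,\Phi\rangle$ is a structure with an extensional choice function are routine.
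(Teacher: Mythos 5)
Your construction is the same as the paper's: a Lindenbaum extension $\Gamma^*$, a term model on $\approx$-classes of closed terms, a choice function $\Phi$ sending each set represented by some $A(x)$ to $\widetilde{\meps x{A(x)}}$, and a truth lemma by simultaneous induction, with (crit) supplying the choice-function property. The gap is exactly at the point you yourself flag as the main obstacle, and your proposed way out of it is wrong. You suggest that in a maximal $\ECe$-consistent set no clash can arise between two closed \eps-terms $\meps x{A(x)}$ and $\meps x{B(x)}$ whose represented sets coincide. But such a clash does arise: the negation of an instance of (ext) is consistent in the intensional calculus $\ECe$ (the paper says as much when motivating the intensional semantics for $\PCe$), so there is a maximal $\ECe$-consistent $\Gamma^*$ with $\{\widetilde t : A(t) \in \Gamma^*\} = \{\widetilde t : B(t) \in \Gamma^*\}$ but $\meps x{A(x)} = \meps x{B(x)} \notin \Gamma^*$. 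For such a $\Gamma^*$ your $\Phi$ cannot simultaneously send the common represented set to both $\widetilde{\meps x{A(x)}}$ and $\widetilde{\meps x{B(x)}}$, so the truth lemma fails for one of the two terms; indeed no extensional $\M, \Phi$ can satisfy such a $\Gamma^*$ at all, since (ext) is valid under Definition~\ref{ext-sat}. Picking the $\prec$-least representing term, as you propose, does not help: the truth lemma must hold for \emph{every} closed \eps-term, not just the least one.

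The repair --- which is what the paper's proof actually does, and what the section title ``Completeness for $\PCe^\ext$'' signals (the $\nproves{\eps}$ in the statement is one of the mistakes the notes invite you to find) --- is to assume consistency in the \emph{extensional} calculus $\ECex$ and use (ext) to prove $\Phi$ well defined. Concretely: if $A(x)$ and $B(x)$ represent the same set, instantiate both at $t \seq \meps x{\lnot(A(x) \liff B(x))}$; since $\widetilde t$ lies in one represented set iff it lies in the other, deductive closure (via $(=_2)$) and negation-completeness put $A(t) \liff B(t)$, i.e.\ the antecedent of (ext), into $\Gamma^*$, whence $\meps x{A(x)} = \meps x{B(x)} \in \Gamma^*$ and the two classes coincide. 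With that supplied, the remainder of your outline goes through as in the paper.
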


\begin{thm}[Completeness]
  If $\Gamma \cup \{A\}$ are sentences in $\Le$ and $\Gamma \models
  A$, then $\Gamma \proves{\eps} A$.
\end{thm}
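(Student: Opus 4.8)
\emph{Proof strategy.} The plan is to derive the theorem from the preceding model existence lemma by contraposition: rather than constructing a proof of~$A$ from~$\Gamma$ directly, I would show that if $A$ is \emph{not} $\eps$-provable from~$\Gamma$ then $\Gamma \cup \{\lnot A\}$ is $\eps$-consistent, so that the model existence lemma supplies a structure in which $\Gamma$ holds but $A$ fails. Thus essentially all of the work is outsourced to the model existence lemma (a Henkin-style term-model construction), which I am taking as given; what remains here is routine.

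I would proceed as follows. First, assume $\Gamma \nproves{\eps} A$. Second, apply Corollary~\ref{incons} to conclude $\Gamma \cup \{\lnot A\} \nproves{\eps} \bot$; at this point I would note that the Deduction Theorem, and hence Corollary~\ref{incons}, holds for $\proves{\eps}$ and not merely for $\proves{}$, since the transformation in Lemma~\ref{ded-lemma} only ever inserts tautologies, (MP)-steps, and --- where relevant --- (R$\exists$) and (R$\forall$) inferences, never a critical formula, so it never leaves~\ECe. Third, since $\Gamma \cup \{A\}$ is a set of $\Le$-sentences, so is $\Gamma \cup \{\lnot A\}$; applying the model existence lemma to the latter yields a structure~$\M$ and an extensional choice function~$\Phi$ with $\M, \Phi \models \Gamma \cup \{\lnot A\}$.

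Finally I would read off the conclusion. We have $\M, \Phi \models \Gamma$ and $\M, \Phi \models \lnot A$; the latter unwinds to $\sat \M \Phi s {\lnot A}$, i.e.\ $\M, \Phi, s \not\models A$, for every assignment~$s$, and since the domain of~$\M$ is nonempty (indeed since $A$ is a sentence) this already gives $\M, \Phi \not\models A$. Hence $\M$ and~$\Phi$ witness $\Gamma \not\models A$, which is exactly the contrapositive of the statement, so we are done. The only spot requiring a little care is the transfer of the Deduction Theorem to the $\eps$-calculus mentioned above; the genuine obstacle --- which this argument entirely sidesteps --- is the model existence lemma itself: one must extend an $\eps$-consistent set of $\Le$-sentences to a suitably saturated maximal $\eps$-consistent set, build a term model from it, and extract from that model an honest extensional choice function on the full power set of the domain whose values on $\eps$-terms are precisely those forced by the critical formulas.
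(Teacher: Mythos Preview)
Your argument is correct and follows the same route as the paper: contraposition via Corollary~\ref{incons} together with the model existence lemma. In fact your version repairs a slip in the paper's printed proof, which begins ``Suppose $\Gamma \not\models A$'' and then invokes the lemma in the wrong direction; the correct contrapositive is exactly the one you give.
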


\begin{proof}
  Suppose $\Gamma \not\models A$. Then for some $\M$, $\Phi$ we have
  $\M, \Phi \models \Gamma$ but $\M, \Phi \not\models A$. Hence $\M,
  \Phi \models \Gamma \cup \{\lnot A\}$.  By the Lemma, $\Gamma \cup
  \{\lnot A\} \proves{\eps} \bot$. By Corollary~\ref{incons}, $\Gamma
  \proves{\eps} A$.
\end{proof}

The proof of the Lemma comes in several stages.  We have to show that
if $\Gamma$ is consistent, we can construct $\M$, $\Phi$, and $s$ so
that $\sat \M \Phi s \Gamma$. Since $\FV(\Gamma) = \emptyset$, we then
have $\M, \Phi \models \Gamma$.

\begin{lem}
  If $\Gamma \nproves{\eps} \bot$, there is $\Gamma^* \supseteq
  \Gamma$ with (1) $\Gamma^* \nproves{\eps} \bot$ and (2) for all
  formulas $A$, either $A \in \Gamma^*$ or $\lnot A \in \Gamma^*$.
\end{lem}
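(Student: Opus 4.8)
The plan is the standard Lindenbaum construction, done syntactically. First I would fix an enumeration $A_0, A_1, A_2, \dots$ of all formulas of~$\Le$ (the vocabulary of $\Le$ is countable, hence so is $\Frm(\Le)$; for an uncountable language one would instead apply Zorn's Lemma to the poset of consistent sets extending~$\Gamma$, ordered by inclusion, whose chains have consistent unions by the finiteness argument below). Put $\Gamma_0 = \Gamma$ and define recursively
\[
\Gamma_{i+1} = \begin{cases} \Gamma_i \cup \{A_i\} & \text{if } \Gamma_i \cup \{A_i\} \nproves{\eps} \bot,\\ \Gamma_i \cup \{\lnot A_i\} & \text{otherwise,}\end{cases}
\]
and set $\Gamma^* = \bigcup_i \Gamma_i$. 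Then $\Gamma^* \supseteq \Gamma_0 = \Gamma$, and clause~(2) is immediate: an arbitrary formula is $A_i$ for some~$i$, and by construction either $A_i \in \Gamma_{i+1} \subseteq \Gamma^*$ or $\lnot A_i \in \Gamma_{i+1} \subseteq \Gamma^*$.

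The substantive point is that every $\Gamma_i$ is consistent, which I would prove by induction on~$i$. The base case is the hypothesis $\Gamma \nproves{\eps} \bot$. For the step, assume $\Gamma_i \nproves{\eps}\bot$. If the first clause applied in defining $\Gamma_{i+1}$ there is nothing to show, so suppose $\Gamma_i \cup \{A_i\} \proves{\eps}\bot$; it then suffices to show $\Gamma_i \cup \{\lnot A_i\} \nproves{\eps} \bot$. If this failed, then by the Deduction Theorem (in its $\ECe$ form, discussed below) we would have $\Gamma_i \proves{\eps} A_i \lif \bot$ and $\Gamma_i \proves{\eps} \lnot A_i \lif \bot$; since $(A_i \lif \bot) \lif \lnot A_i$ and $(\lnot A_i \lif \bot) \lif A_i$ are tautologies, (MP) gives $\Gamma_i \proves{\eps} \lnot A_i$ and $\Gamma_i \proves{\eps} A_i$, and then the tautology $A_i \lif (\lnot A_i \lif \bot)$ together with two more applications of (MP) yields $\Gamma_i \proves{\eps}\bot$, contradicting the induction hypothesis.

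Two loose ends remain to be tied. First, Theorem~\ref{deduction-thm} and Corollary~\ref{incons} are stated for~$\EC$ and for sets of sentences; I would remark that both hold verbatim for~$\ECe$ and for arbitrary formulas. The construction in Lemma~\ref{ded-lemma} goes through unchanged once its clause for tautologies is read as a clause for arbitrary axioms — for a critical formula $A_i$ one adds $A_i$, the tautology $A_i \lif (A \lif A_i)$, and $A \lif A_i$ by (MP) — and since $\ECe$-proofs use neither (R$\exists$) nor (R$\forall$), they contain no eigenvariables, so the proviso of Lemma~\ref{ded-lemma} is vacuous and the restriction to sentences is not needed. Second, for clause~(1) itself: any $\ECe$-proof of $\bot$ from $\Gamma^*$ is a finite sequence, hence uses only finitely many premises from $\Gamma^*$, hence — as the $\Gamma_i$ form an increasing chain — only premises lying in some single $\Gamma_i$; so $\Gamma_i \proves{\eps}\bot$, contradicting the consistency of $\Gamma_i$ just established. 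I do not expect a genuine obstacle anywhere: the only care required is in checking that the deduction-theorem machinery transfers cleanly to $\ECe$ and to formulas with free variables, and in the routine compactness-of-proofs observation for $\Gamma^*$.
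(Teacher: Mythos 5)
Your proposal is correct and follows essentially the same route as the paper: the Lindenbaum construction over an enumeration of $\Frm_\eps$, consistency of each $\Gamma_i$ by induction via the Deduction Theorem and a propositional tautology, and finiteness of proofs for the consistency of the union. Your extra care in checking that Lemma~\ref{ded-lemma} transfers to $\ECe$ (axiom clause generalized, eigenvariable proviso vacuous, no restriction to sentences needed) addresses a point the paper glosses over and is a welcome addition.
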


\begin{proof}
  Let $A_1$, $A_2$, \dots\ be an enumeration of $\Frm_\eps$. Define
  $\Gamma_0 = \Gamma$ and \[ \Gamma_{n+1} =
\begin{cases}
  \Gamma_n \cup \{A_n\} & 
  \text{if $\Gamma_n \cup \{A_n\} \nproves{\eps} \bot$} \\
  \Gamma_n \cup \{\lnot A_n\} & 
  \text{if $\Gamma_n \cup \{\lnot A_n\}
    \nproves{\eps} \bot$ otherwise}
\end{cases}
\]
Let $\Gamma^* = \bigcup_{n\ge 0} \Gamma_n$. Obviously, $\Gamma
\subseteq \Gamma^*$. For (1), observe that if $\Gamma^*
\proves[\pi]{\eps} \bot$, then $\pi$ contains only finitely many
formulas from~$\Gamma^*$, so for some $n$, $\Gamma_n
\proves[\pi]{\eps} \bot$. But $\Gamma_n$ is consistent by definition.

To verify (2), we have to show that for each $n$, either $\Gamma_n
\cup \{A_n\} \nproves{\eps} \bot$ or $\Gamma_n \cup \{\lnot A\}
\nproves{\eps} \bot$. For $n = 0$, this is the assumtion of the lemma.
So suppose the claim holds for $n-1$.  Suppose $\Gamma_n \cup \{A\}
\proves[\pi]{\eps} \bot$ and $\Gamma_n \cup \{\lnot A\}
\proves[\pi']{\eps} \bot$. Then by the Deduction Theorem, we have
$\Gamma_n \proves[{\pi[A]}] A \lif \bot$ and $\Gamma_n
\proves[{\pi'[A']}] \lnot A \lif \bot$. Since $(A \lif \bot) \lif
((\lnot A \lif \bot) \lif \bot)$ is a tautology, we have $\Gamma_n
\proves{\eps} \bot$, contradicting the induction hypothesis.
\end{proof}

\begin{lem}\label{lem-closed}
If $\Gamma^* \vdash{\eps} B$, then $B \in \Gamma^*$.
\end{lem}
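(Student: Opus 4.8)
The plan is to exploit the two properties of $\Gamma^*$ supplied by the preceding lemma, namely that $\Gamma^* \nproves{\eps} \bot$ and that $\Gamma^*$ is \emph{complete}: for every formula $A$, either $A \in \Gamma^*$ or $\lnot A \in \Gamma^*$. The argument is contrapositive. Suppose $\Gamma^* \proves{\eps} B$ but $B \notin \Gamma^*$; I will derive $\Gamma^* \proves{\eps} \bot$, contradicting consistency, so that $B \in \Gamma^*$ must hold after all.

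First I would note that, by completeness of $\Gamma^*$, the assumption $B \notin \Gamma^*$ yields $\lnot B \in \Gamma^*$, and hence the one-element sequence consisting of $\lnot B$ alone is already a proof of $\lnot B$ from $\Gamma^*$; so $\Gamma^* \proves{\eps} \lnot B$. Next I would take the given proof $\pi$ of $B$ from $\Gamma^*$, splice onto it this proof of $\lnot B$, and append three further lines: the tautology $B \lif (\lnot B \lif \bot)$ (an instance of (Taut), hence an $\ECe$-axiom), then $\lnot B \lif \bot$ by (MP) from $B$ and that tautology, and finally $\bot$ by (MP) from $\lnot B$ and $\lnot B \lif \bot$. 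The result is a proof of $\bot$ from $\Gamma^*$ in $\ECe$, i.e.\ $\Gamma^* \proves{\eps} \bot$, contradicting property~(1) of $\Gamma^*$.

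The only step that needs a word of care is that splicing two $\ECe$-proofs together (shifting the (MP)-references in the second block to account for the length of the first) again yields a legitimate $\ECe$-proof. This is immediate because the sole rule of inference of $\ECe$ is (MP): there are no quantifier rules, hence no eigenvariable conditions whose satisfaction could be spoiled by the concatenation, and the tacit bound-variable renaming built into the definition of proof does no harm. So I anticipate no real obstacle; the lemma is essentially a repackaging of maximal consistency together with one propositional tautology. (One could alternatively appeal to Corollary~\ref{incons} after observing $\Gamma^* \cup \{\lnot B\} = \Gamma^*$, but since $\Gamma^*$ may contain formulas with free variables the bare concatenation argument above sidesteps the ``set of sentences'' hypothesis and is cleaner.)
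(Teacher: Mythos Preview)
Your argument is correct and is exactly the paper's approach spelled out in detail: the paper's one-line proof reads ``If not, then $\lnot B \in \Gamma^*$ by maximality, so $\Gamma^*$ would be inconsistent,'' which is precisely your contrapositive together with the propositional derivation of $\bot$ you supply. Your added remarks about why concatenation in $\ECe$ is unproblematic (no eigenvariable conditions) and why the direct argument is preferable to invoking Corollary~\ref{incons} are helpful elaborations but not departures from the paper's line.
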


\begin{proof}
  If not, then $\lnot B \in \Gamma^*$ by maximality, so $\Gamma^*$
  would be inconsistent.
\end{proof}

\begin{defn}
  Let $\approx$ be the relation on $\Trm_\eps$ defined by 
\[ 
t \approx u \text{ iff } t = u \in \Gamma^*
\]
It is easily seen that $\approx$ is an equivalence relation.  Let
$\widetilde t = \{u : u \approx t\}$ and $\widetilde \Trm =
\{\widetilde t : t \in \Trm\}$.
\end{defn}

\begin{defn}
A set $T \in \widetilde \Trm$ is \emph{represented by $A(x)$}
if $T = \{\widetilde t : A(t) \in \Gamma^*\}$.

Let $\Phi_0$ be a fixed choice function on $\widetilde \Trm$, and define
\[
\Phi(T) = \begin{cases}
\widetilde{\meps x {A(x)}} & \text{if $T$ is represented by $A(x)$}\\
\Phi_0(T) & \text{otherwise.}
\end{cases}
\]
\end{defn}

\begin{prop}
$\Phi$ is a well-defined choice function on $\widetilde \Trm$.
\end{prop}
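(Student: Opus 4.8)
The plan is to check three things about the defining clause for $\Phi$: that it assigns a value to every $T\subseteq\widetilde\Trm$, that this value lies in $\widetilde\Trm$, and that it lies in $T$ whenever $T\neq\emptyset$. The only nontrivial point is that in the first clause the value $\widetilde{\meps x{A(x)}}$ does not depend on which formula $A(x)$ is chosen to represent $T$. Throughout I will use that $\Gamma^*$ is deductively closed (Lemma~\ref{lem-closed}) and complete, so that for every sentence $C$ exactly one of $C,\lnot C$ lies in $\Gamma^*$, and that $\approx$ respects substitution: if $t = u\in\Gamma^*$ and $C$ is any formula then $C(t)\in\Gamma^*$ iff $C(u)\in\Gamma^*$, which follows from the instance $t = u\lif(C(t)\liff C(u))$ of $(=_2)$ together with deductive closure and completeness.

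The main step is a representation-independence lemma: if $A(x)$ and $B(x)$ both represent $T$, then $A(t)\in\Gamma^*$ iff $B(t)\in\Gamma^*$ for every term $t$. Indeed, if $A(t)\in\Gamma^*$ then $\widetilde t\in T$, so $\widetilde t=\widetilde u$ for some $u$ with $B(u)\in\Gamma^*$; hence $t = u\in\Gamma^*$, and by the congruence property $B(t)\in\Gamma^*$. The converse is symmetric. Combining this with completeness of $\Gamma^*$ and the tautology $(\lnot A(t)\land\lnot B(t))\lif(A(t)\liff B(t))$ yields the stronger conclusion $A(t)\liff B(t)\in\Gamma^*$ for every term $t$.

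Well-definedness now follows. Suppose $A(x)$ and $B(x)$ both represent $T$ (we may assume $x$ has no bound occurrence in either, passing to $\seq$-variants if necessary, so that $\meps x{A(x)}$ and $\meps x{B(x)}$ are genuine terms). Apply the previous paragraph to the term $t_0\seq\meps x{\lnot(A(x)\liff B(x))}$: it gives $A(t_0)\liff B(t_0)\in\Gamma^*$, which is exactly the formula $(\forall x(A(x)\liff B(x)))^\eps$. By the extensionality axiom (ext) and deductive closure, $\meps x{A(x)}=\meps x{B(x)}\in\Gamma^*$, i.e.\ $\widetilde{\meps x{A(x)}}=\widetilde{\meps x{B(x)}}$. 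Hence $\Phi$ is single-valued; it is total because the two clauses exhaust $\wp(\widetilde\Trm)$ (the value for $\emptyset$ being equally well-defined, by the same argument, or simply by totality of $\Phi_0$), and its values clearly lie in $\widetilde\Trm$. (Alternatively, to stay within the intensional calculus $\ECe$, one fixes an enumeration of $\Frm_\eps$ and lets $A(x)$ be, for each representable $T$, the first formula representing it; then this paragraph is vacuous.)

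It remains to check that $\Phi$ is a choice function. Let $T\neq\emptyset$. If $T$ is represented by $A(x)$, pick $u$ with $\widetilde u\in T$, so $A(u)\in\Gamma^*$; the critical formula $A(u)\lif A(\meps x{A(x)})$ is an axiom, hence in $\Gamma^*$, so $A(\meps x{A(x)})\in\Gamma^*$ and therefore $\widetilde{\meps x{A(x)}}\in T$. If $T$ is represented by no formula, then $\Phi(T)=\Phi_0(T)\in T$ since $\Phi_0$ is a choice function on $\widetilde\Trm$. The genuinely delicate point of the whole argument is the well-definedness step: reducing ``$A(x)$ and $B(x)$ represent the same set'' to the provable identity $\meps x{A(x)}=\meps x{B(x)}$ is exactly what forces the appeal to (ext), and it is the only place that axiom is used.
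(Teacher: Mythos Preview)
Your argument is correct and follows exactly the route the paper intends: the paper's own proof is only the hint ``Use (ext) for well-definedness and (crit) for choice function,'' and you have filled in precisely those two steps. Your reduction of ``$A(x)$ and $B(x)$ represent the same $T$'' to $A(t_0)\liff B(t_0)\in\Gamma^*$ for $t_0\seq\meps x{\lnot(A(x)\liff B(x))}$, followed by an application of (ext), is the expected argument, and the use of a critical formula to place $\widetilde{\meps x{A(x)}}$ in $T$ is likewise standard.
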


\begin{proof}
  Exercise. Use (ext) for well-definedness and (crit) for choice
  function.
\end{proof}

Now let $\M =\langle \widetilde \Trm, (\cdot)^\M\rangle$ with $c^\M =
\widetilde c$, $(P_i^n)^\M = \{\langle\widetilde t_1, \dots,
\widetilde t_1\rangle : P_i^n(t_1, \ldots, t_n)\}$, and let $s(x) =
\widetilde s$.

\begin{prop}
  $\sat \M \Phi s {\Gamma^*}$.
\end{prop}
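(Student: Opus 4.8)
The plan is to prove the \emph{truth lemma} for the canonical term model~$\M$ and read the Proposition off it. The truth lemma states: for every term $t\in\Trm_\eps$ one has $\val\M\Phi s t=\widetilde t$, and for every formula $A\in\Frm_\eps$ one has $\sat\M\Phi s A$ iff $A\in\Gamma^*$. Granting this, the Proposition is immediate: the truth lemma applied to each $A\in\Gamma^*$ yields $\sat\M\Phi s A$, hence $\sat\M\Phi s{\Gamma^*}$. Observe that $\Le$ contains the $\eps$-operator but no quantifiers, so only the propositional connectives, atomic formulas, and the $\eps$-term clause require attention.

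The easy cases go by a routine simultaneous structural induction on terms and formulas. For variables, $\val\M\Phi s x=s(x)=\widetilde x$; for constants, $\val\M\Phi s c=c^\M=\widetilde c$. For a function term $f^n_i(t_1,\dots,t_n)$ one reads the stipulation defining~$\M$ in its intended general form, $(f^n_i)^\M(\widetilde{t_1},\dots,\widetilde{t_n})=\widetilde{f^n_i(t_1,\dots,t_n)}$, which is well defined on $\approx$-classes because $(=_2)$ together with the deductive closure of $\Gamma^*$ (Lemma~\ref{lem-closed}) makes the truth of equalities a congruence; the case then follows from the inductive hypothesis on the~$t_j$. Atomic formulas $P^n_i(t_1,\dots,t_n)$ and equalities $t=u$ reduce likewise, using the inductive hypothesis on subterms, to the definitions of $(P^n_i)^\M$ and of~$\approx$ (for identity: $\widetilde t=\widetilde u$ iff $t\approx u$ iff $t=u\in\Gamma^*$), with $(=_2)$ again securing independence of the choice of representatives. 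The constants $\top,\bot$ are trivial, and $\lnot,\land,\lor,\lif,\liff$ are handled using the maximality, consistency, and deductive closure of $\Gamma^*$; e.g.\ $\lnot A\in\Gamma^*$ iff $A\notin\Gamma^*$, and $A\land B\in\Gamma^*$ iff $A\in\Gamma^*$ and $B\in\Gamma^*$.

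The crux is the $\eps$-term clause $E=\meps x{A(x)}$. By the $\eps$-term clause of Definition~\ref{ext-sat}, $\val\M\Phi s{\meps x{A(x)}}=\Phi(Y)$ where $Y=\{m\in\widetilde\Trm:\sat\M\Phi{\st s x m}{A(x)}\}$. Writing an arbitrary element of $\widetilde\Trm$ as~$\widetilde u$, the Substitution Lemma together with the term clause already established ($\val\M\Phi s u=\widetilde u$) gives $\sat\M\Phi{\st s x{\widetilde u}}{A(x)}$ iff $\sat\M\Phi s{A(u)}$, and the inductive hypothesis applied to $A(u)$ gives $\sat\M\Phi s{A(u)}$ iff $A(u)\in\Gamma^*$. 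Hence $Y=\{\widetilde u:A(u)\in\Gamma^*\}$, i.e.\ $Y$ is \emph{represented by~$A(x)$}, so by the very first clause in the definition of~$\Phi$ we get $\Phi(Y)=\widetilde{\meps x{A(x)}}$, which is exactly what the term clause demands for~$E$. (Independence of $\{\widetilde u:A(u)\in\Gamma^*\}$ from the chosen representative~$u$ is once more $(=_2)$; that $\Phi$ is a genuine function, so this value is unambiguous even when $Y$ is also represented by some $B(x)$, is the earlier Proposition on~$\Phi$, which is where (ext) is used.)

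I expect the main obstacle to be a well-foundedness subtlety concealed in that last step: $A(u)$ may contain strictly more $\eps$-operators than $\meps x{A(x)}$ when $u$ itself contains $\eps$-terms, so ``induction on the number of symbols'' does not literally license the appeal to the inductive hypothesis for~$A(u)$. The remedy is to prove the strengthened statement: for every $\widetilde\Trm$-valued assignment~$\gamma$ and every expression~$E$, $\val\M\Phi\gamma E=\widetilde{E^\gamma}$, respectively $\sat\M\Phi\gamma E$ iff $E^\gamma\in\Gamma^*$, where $E^\gamma$ is the result of substituting into~$E$, for each free variable, a representative of its $\gamma$-value. This is proved by induction on the structure of~$E$ \emph{alone}, and in the $\eps$-clause one invokes the hypothesis for the structural subpart $A(x)$ under the modified assignment $\st\gamma x{\widetilde u}$ --- never for the larger formula~$A(u)$. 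The original Proposition is the case $\gamma=s$, for which $E^\gamma=E$. What then remains is routine bookkeeping with bound-variable renaming when forming $E^\gamma$ and when composing substitutions, for which the conventions of the first chapter suffice.
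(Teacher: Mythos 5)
Your proof is correct and follows the same overall strategy as the paper's: establish the truth lemma ($\val \M \Phi s t = \widetilde t$ for all terms and $\sat \M \Phi s A$ iff $A \in \Gamma^*$ for all formulas) by induction, and read the Proposition off it. Where you diverge is precisely where the paper's argument is thinnest. In the $\eps$-clause the paper asserts that $\val \M \Phi s {A(x)}$ ``is represented by $A(x)$ by induction hypothesis''; unwound via the Substitution Lemma, that is an appeal to the induction hypothesis for $A(u)$ for arbitrary terms~$u$, and such formulas can have strictly greater complexity than $\meps x{A(x)}$ itself, so the simultaneous induction ``on complexity'' does not literally license the appeal --- exactly the well-foundedness worry you flag. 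Your strengthened statement, quantified over all $\widetilde\Trm$-valued assignments~$\gamma$ and proved by induction on the structure of~$E$ alone so that the $\eps$-clause only calls the hypothesis for the proper subexpression $A(x)$ under $\st \gamma x {\widetilde u}$, is the standard and correct repair; it buys a genuinely well-founded induction at the cost of the $E^\gamma$ bookkeeping you mention. The paper's terser version implicitly relies on the reader supplying this strengthening (or an equivalent auxiliary measure), so your write-up is, if anything, more complete than the original.
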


\begin{proof}
  We show that $\val \M \Phi s t = \widetilde t$ and $\sat \M \Phi s
  A$ iff $A \in \Gamma^*$ by simultaneuous induction on the complexity
  of $t$ and $A$.

  If $t = c$ is a constant, the claim holds by definition of
  $(\cdot)^\M$.  If $A = \bot$ or $= \top$, the claim holds by
  Lemma~\ref{lem-closed}.

  If $A \equiv P^n(t_1, \ldots, t_n)$, then by induction hypothesis,
  $\val \M \Phi s t_i = \widetilde {t_i}$.  By definition of
  $(\cdot)^\M$, $\langle \widetilde{t_1}, \dots,
  \widetilde{t_n}\rangle \in (P^n_i)(t_1, \dots, t_n)$ iff $P^n_i(t_1,
  \dots, t_n) \in \Gamma^*$.

  If $A\equiv \lnot B$, $(B \land C)$, $(B \lor C)$, $(B \lif C)$, $(B
  \liff C)$, the claim follows immediately from the induction
  hypothesis and the definition of $\models$ and the closure
  properties of $\Gamma^*$. For instance, $\sat \M \Phi s {(B \land
    C)}$ iff $\sat \M \Phi s B$ and $\sat \M \Phi s C$. By induction
  hypothesis, this is the case iff $B \in \Gamma^*$ and $C \in
  \Gamma^*$. But since $B, C \proves{\eps} B \land C$ and $B \land C
  \proves{\eps} B$ and $\proves{\eps} C$, this is the case iff $(B
  \land C) \in \Gamma^*$.  Remaining cases: Exercise.

  If $t \seq \meps x {A(x)}$, then $\val \M \Phi s t = \Phi(\val \M
  \Phi s {A(x)})$. Since $\val \M \Phi s {A(x)} $ is represented by
  $A(x)$ by induction hypothesis, we have $\val \M \Phi s t =
  \widetilde{\meps x {A(x)}}$ by definition of $\Phi$.
\end{proof}

\begin{ex}
Complete the proof. 
\end{ex}

\begin{ex}
Generalize the proof to $\Lea$ and $\PCe$.
\end{ex}

\begin{ex}
  Show $\ECe$ without ($=_1$) and ($=_2$), (ext), and the additional
  axiom
\begin{equation}
  (\forall x(A(x) \liff B(x)))^\eps \lif (C(\meps x {A(x)}) 
  \liff C(\meps x{B(x)})) \tag{$\ext^-$}
\end{equation}
is complete for $\models$ in the language $\Lea^-$.
\end{ex}

\section[Semantics for \PCe]{Semantics for \PCe\new{2}}

In order to give a complete semantics for \PCe, i.e., for the calculus
without the extensionality axion~(ext), it is necessary to chnage the
notion of choice function so that two \eps-terms \meps x {A(x)} and
\meps x {B(x)} may be assigned different representatives even when
$\sat \M \Phi s {\forall x(A(x) \liff B(x))}$, since then the negation
of (ext) is consistent in the resulting calculus.  The idea is to add
the \eps-term itself as an additional argument to the choice function.
However, in order for this semantics to be sound for the
calculus---specifically, in order for ($=_2$) to be valid---we have to
use not \eps-terms but \eps-types.

\begin{defn}
  An \emph{intensional choice operator} is a mapping $\Psi\colon \Typ
  \times \card{\M}^{<\omega} \to \card{\M}^{\wp(\card{\M})}$ such that
  for every type $p = \meps x{A(x; y_1, \dots, y_n)}$ is a type, and
  $m_1$, \dots,~$m_n \in \card{\M}$, $\Psi(p, m_1, \dots, m_n)$ is a
  choice function.
\end{defn}

\begin{defn}
  If $\M$ is a structure, $\Psi$ an intensional choice operator, and
  $s$ an assignment, $\val \M \Psi s t$ and $\sat \M \Psi s A$ is
  defined as before, except (5) in Definition~\ref{ext-sat} is
  replaced by:
  \begin{enumerate}
  \item[($\ref{epsilon-sat}'$)] $\val \M \Psi s {\meps x{A(x)}} =
    \Psi(p, m_1, \dots, m_n)(\val \M \Phi s {A(x)})$ where 
    \begin{enumerate}
    \item $p = \meps x{A'(x; x_1, \dots, x_n)}$ is the type of $\meps
      x {A(x)}$,
    \item $t_1$, \dots, $t_n$ are the subterms corresponding to $x_1$,
      \dots, $x_n$, i.e., $\meps x{A(x)} \seq \meps x{A'(x; t_1,
        \dots, t_n)}$, 
    \item $m_i = \val \M \Psi s t_1$, and 
    \item
    $\val \M \Phi s {A(x)} = 
    \{ m \in \card\M : \sat \M \Psi {\st s x m} A(x)\}$
    \end{enumerate}
\end{enumerate}
\end{defn}

\begin{ex}
  Prove the substitution lemma for this semantics.
\end{ex}

\begin{ex}
  Prove soundness.
\end{ex}

\begin{ex}
  Prove completeness of $\PCe$ for this semantics.
\end{ex}

\begin{ex}
  Define a semantics for the language without $=$ where the choice
  operatore takes $\eps$-terms as arguments. Is the semantics
  sound and complete for $\PCe^-$?
\end{ex}

\chapter[The First Epsilon Theorem]{The First Epsilon Theorem\new{2}}

\section{The Case Without Identity}

\begin{thm}
  If $E$ is a formula not containing any \eps-terms
  and $\proves{\PCe} E$, then $\proves{\EC} E$.
\end{thm}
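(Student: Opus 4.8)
\section*{Proof proposal}

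The plan is to strip the quantifiers with the \eps-Embedding Lemma and then remove the critical formulas by the Hilbert--Ackermann \eps-substitution method. First, the \eps-Embedding Lemma turns $\proves{\PCe}E$ into $\proves{\eps}E^\eps$ (with $\inst\emptyset=\emptyset$); since $E$ contains no \eps-terms — and, as the statement evidently intends, its conclusion being about $\EC$, no quantifiers either — we get $E^\eps\seq E$, so it remains to show that $\proves{\eps}E$ with $E$ \eps-free implies $\proves{\EC}E$. I would fix a proof of $E$ in $\ECe$, observe that it uses only finitely many instances of (crit), collect these in a set $\mathcal{C}$, and read the proof as an $\EC$-derivation of $E$ from $\mathcal{C}$, so that $\mathcal{C}\proves{\EC}E$. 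The task then becomes: whenever $\mathcal{C}$ is a finite set of critical formulas and $\mathcal{C}\proves{\EC}E$ with $E$ \eps-free, then $\proves{\EC}E$.

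This I would prove by induction on the pair $(\rho,N)$ ordered lexicographically, where $\rho$ is the greatest rank of an \eps-term occurring in $\mathcal{C}$ and $N$ the number of such \eps-terms of rank $\rho$; the base case $\rho=0$ (no critical formulas) is trivial. For the inductive step I pick an \eps-term $e\seq\meps x{B(x)}$ of maximal rank in $\mathcal{C}$, and among those of maximal degree — maximal rank makes $e$ subordinate to no \eps-term, so $e$ occurs in a proof only as a genuine subterm and may be replaced throughout (with the renamings built into $\ST E t u$) without spoiling proofhood — arranging also, by the choice of $e$ or a mild rewriting of the proof, that $e$ occurs in none of its own critical values. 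Let the critical formulas of $\mathcal{C}$ about $e$ be $B(s_1)\lif B(e),\dots,B(s_m)\lif B(e)$ and let $\mathcal{C}'$ be the rest; by Lemma~\ref{ded-lemma} (whose eigenvariable proviso is vacuous for $\EC$-proofs), $\mathcal{C}'\proves{\EC}\Theta(e)\lif E$, where $\Theta(t)\seq\bigwedge_j(B(s_j)\lif B(t))$. Now I case-split in $\EC$ on the truth-values of $B(s_1),\dots,B(s_m)$: with $\chi_0\seq\bigwedge_j\lnot B(s_j)$ and $v_0\seq s_1$, and $\chi_i\seq B(s_i)\land\bigwedge_{j<i}\lnot B(s_j)$ and $v_i\seq s_i$ for $i\ge1$, pure propositional logic gives $\proves{\EC}\bigvee_i\chi_i$ and $\proves{\EC}\chi_i\lif\Theta(v_i)$. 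Substituting $e:=v_i$ throughout the proof of $\Theta(e)\lif E$ — legitimate by the choice of $e$, and leaving $E$ unchanged since $E$ is \eps-free — yields $\ST{\mathcal{C}'}{e}{v_i}\proves{\EC}\Theta(v_i)\lif E$, hence $\ST{\mathcal{C}'}{e}{v_i}\proves{\EC}\chi_i\lif E$; combining over all cases and using $\proves{\EC}\bigvee_i\chi_i$ gives $\mathcal{D}\proves{\EC}E$ with $\mathcal{D}=\bigcup_i\ST{\mathcal{C}'}{e}{v_i}$, again a finite set of critical formulas.

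The main obstacle I anticipate is showing that $\mathcal{D}$ has strictly smaller measure than $\mathcal{C}$. One must check that substituting $v_i$ (all of whose \eps-subterms have rank $\le\rho$) for $e$ raises the rank of no \eps-term — this is exactly where the capture-avoiding renaming is needed — and produces no rank-$\rho$ \eps-term in a critical formula beyond ones already counted; here the maximal degree of $e$ (so that no rank-$\rho$ \eps-term is nested in $e$, whence the other rank-$\rho$ terms survive the substitution untouched) and the absence of $e$ from its own critical values (so that $e$ does not reappear) are precisely what make it work. This bookkeeping is the combinatorial heart of Hilbert and Ackermann's argument. I note finally that, because we are in the identity-free case, the theorem also has a quick semantic proof: by soundness $\proves{\PCe}E$ gives $\models^l E$, and since $E$ is quantifier- and \eps-free its truth value is a Boolean function of its atoms and independent of the choice function, so any falsifying propositional assignment of those atoms would be realised in a suitable term structure — hence $E$ is a tautology and $\proves{\EC}E$; but that route discards the constructive Herbrand-disjunction content (the first \eps-theorem being essentially Herbrand's theorem) that the substitution method delivers.
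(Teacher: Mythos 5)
Your main argument is the paper's own: reduce to $\ECe$ by the \eps-embedding lemma, then eliminate the critical formulas of one maximal-rank (and maximal-degree) critical \eps-term at a time by a case distinction on its witness terms, discharging via the deduction lemma, with a double induction on the rank and on the number of maximal-rank critical terms. Whether you apply the deduction theorem before or after the substitution $e \mapsto v_i$ is immaterial, so this is not a genuinely different route. (Your closing semantic observation for the identity-free case is a correct alternative, but, as you say yourself, it proves less.)

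There is, however, one step that fails as written: the branch $\chi_0$. After replacing $e$ by $v_0 \seq s_1$ throughout the proof of $\Theta(e) \lif E$, the antecedent is not $\Theta(v_0)$ but $\bigwedge_j\bigl(B(\ST{s_j}{e}{v_0}) \lif B(v_0)\bigr)$, and if $e$ occurs in some witness term $s_j$ (e.g.\ a critical formula $B(f(e)) \lif B(e)$), then $\chi_0 \seq \bigwedge_j \lnot B(s_j)$ does not yield $\lnot B(\ST{s_j}{e}{v_0})$, so the implication you need is not a tautology. Your hedge---``arranging, by the choice of $e$ or a mild rewriting, that $e$ occurs in none of its own critical values''---is carrying real weight here and is not justified: the maximal-rank, maximal-degree critical term may be unique and may occur in its own witness terms, and no rewriting is described. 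The repair is immediate and is what the paper does: in the all-false case do not substitute at all, since $\chi_0 \lif \Theta(e)$ is already a tautology, giving $\mathcal{C}' \proves{\EC} \chi_0 \lif E$ directly with $e$ no longer critical. (The branches $\chi_i$ for $i \ge 1$ are fine as you have them: every consequent becomes $B(s_i)$ with the \emph{original} $s_i$, which $\chi_i$ supplies, regardless of what happens to the antecedents.) One smaller slip: maximality of $\deg{e}$ guarantees that $e$ is not nested in any \emph{other} rank-$\rho$ critical term, not---as you write---that no rank-$\rho$ term is nested in $e$; it is the former fact that makes the other rank-$\rho$ critical terms survive the substitution untouched.
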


\begin{defn}
  An \eps-term $e$ is \emph{critical in~$\pi$} if $A(t) \lif A(e)$ is
  one of the critical formulas in $\pi$.  The \emph{rank~$\rk{\pi}$ of
    a proof~$\pi$} is the maximal rank of its critical \eps-terms. The
  \emph{$r$-degree~$\deg{\pi, r}$} of $\pi$ is the maximum degree of
  its critical \eps-terms of rank~$r$. The \emph{$r$-order~$o(\pi,
    r)$} of $\pi$ is the number of different (up to renaming of bound
  variables) critical \eps-terms of rank~$r$.
\end{defn}

\begin{lem}
  If $e = \meps x{A(x)}$, $\meps y {B(y)}$ are critical in $\pi$,
  $\rk{e} = \rk{\pi}$, and $B^* \equiv B(u) \lif B(\meps y{B(y)})$ is a
  critical formula in~$\pi$. Then, if $e$ is a subterm of $B^*$, it is
  a subterm of $B(y)$ or a subterm of~$u$.
\end{lem}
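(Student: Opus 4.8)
The plan is to locate the subterm occurrences of $e$ in $B^*\seq B(u)\lif B(\meps y{B(y)})$ by treating the matrix $B(y)$ as a context whose free occurrences of $y$ are \emph{slots}, so that $B(u)$ and $B(\meps y{B(y)})$ arise by filling every slot with a copy of $u$, resp.\ of $\meps y{B(y)}$ (all substitutions being capture-avoiding). Fix a subterm occurrence of $e$ in $B^*$; since $B^*$ has a top-level $\lif$ its root lies inside $B(u)$ or inside $B(\meps y{B(y)})$, and I classify it by the position of that root relative to the slots. I may assume $e\not\seq\meps y{B(y)}$, which is the case relevant to the application (if $e\seq\meps y{B(y)}$ then $B^*$ is a critical formula of $e$ itself, and is dealt with separately). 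If the root lies at or below a filled slot, then $e$ is a subterm of the filling, i.e.\ of $u$, resp.\ of $\meps y{B(y)}$; and every subterm of $\meps y{B(y)}$ is either $\meps y{B(y)}$ itself (excluded) or a subterm $t$ of $B(y)$ with $y\notin\FV(t)$, hence a subterm of $B(y)$. If the root lies in the context and no slot falls within the span of the occurrence, then $e$ coincides with a subexpression of $B(y)$ containing no free $y$, again a subterm of $B(y)$. In each of these cases the conclusion holds.

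The remaining case, which carries all the content, is that the root lies in the context while at least one filled slot falls within the span. Then $e\seq\st{e_0}{y}{w}$, where $w$ is $u$ or $\meps y{B(y)}$ and $e_0$ is the subexpression at the corresponding position of $B(y)$; moreover $y\in\FV(e_0)$, $e_0\not\seq y$ (else the root would sit at a slot), and $e_0$ is itself a subterm of $B(y)$, since the binders above its occurrence in $B(y)$ are exactly those above the occurrence of $e$ in $B^*$ and none of them binds $y$. I claim this contradicts $\rk e=\rk\pi$. First, $x\notin\FV(w)$: otherwise $x$ would have a free occurrence inside $e\seq\meps x{A(x)}$, contradicting that every occurrence of $x$ in an \eps-term $\meps x{A(x)}$ is bound; hence $\st{\cdot}{y}{w}$ is capture-free on $e_0$ and on each of its subexpressions. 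A short induction on $e_0$ then yields $\rk{\st{e_0}{y}{w}}=\rk{e_0}$: the \eps-terms subordinate to $\st g y w$ are precisely the $\st{g_0}{y}{w}$ for $g_0$ subordinate to $g$, and passing from $g_0$ to $\st{g_0}{y}{w}$ neither creates nor destroys subordination, the substitution being capture-free. Thus $\rk e=\rk{e_0}$. On the other hand $e_0$ occurs in $B(y)$ with $y\in\FV(e_0)$, and $B(y)$ is the matrix of the \eps-term $\meps y{B(y)}$, so $e_0$ is subordinate to $\meps y{B(y)}$ and hence $\rk{e_0}<\rk{\meps y{B(y)}}$; and $\rk{\meps y{B(y)}}\le\rk\pi$ because $\meps y{B(y)}$ is critical in $\pi$. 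Combining, $\rk\pi=\rk e=\rk{e_0}<\rk{\meps y{B(y)}}\le\rk\pi$, which is absurd. So this case does not arise, and the lemma follows.

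I expect the main obstacle to be bookkeeping rather than ideas: making the ``context / slot / straddle'' trichotomy precise, and checking in each case that the occurrence extracted really is a subterm occurrence (no variable capture), given that substitution and ``subterm'' are only defined up to $\seq$. The one step that is not purely routine is the rank-invariance $\rk{\st{e_0}{y}{w}}=\rk{e_0}$, but that is a clean induction once $x\notin\FV(w)$ is in hand.
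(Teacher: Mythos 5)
Your proof is correct and follows essentially the same route as the paper's: in the only nontrivial case, $e$ arises as an instance $\st{e_0}{y}{w}$ of an \eps-term $e_0$ subordinate to $\meps y{B(y)}$, whence $\rk{\pi}=\rk{e}=\rk{e_0}<\rk{\meps y{B(y)}}\le\rk{\pi}$, a contradiction. You are somewhat more careful than the paper---you make the rank-invariance of the substitution explicit rather than appealing to sameness of type, and you flag the degenerate case $e\seq\meps y{B(y)}$, which the stated lemma tacitly excludes---but the underlying argument is the same.
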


\begin{proof}
  Suppose not. Then, since $e$ is a subterm of $B^*$, we have $B(y)
  \seq B'(\meps x{A'(x, y)}, y)$ and either $e \seq \meps x{A'(x, u)}$
  or $e \seq \meps x{A'(x, \meps y{B(y)})}$. In each case, we see that
  $\meps x{A'(x, y)}$ and $e$ have the same rank, since the latter is
    an instance of the former (and so have the same type). On the
    other hand, in either case, $\meps y {B(y)}$ would be
  \[
  \meps y {B'(\meps x{A'(x, y)}, y)}
  \]
  and so would have a higher rank than $\meps x{A'(x, y)}$ as that
  \eps-term is subordinate to it.  This contradicts $\rk{e} = \rk{\pi}$.
\end{proof}

\begin{lem}
  Let $e$, $B^*$ be as in the lemma, and $t$ be any term. Then 
  \begin{enumerate}
  \item If $e$ is not a subterm of $B(y)$, $\ST{B^*}{e}{t} \equiv
    B(u') \lif B(\meps y{B(y)})$.
  \item If $e$ is a subterm of $B(y)$, i.e., $B(y) \seq B'(e, y)$,
    $\ST{B^*}{e}{t} \equiv B'(t, u') \lif B'(t, \meps{y}{B'(t, y)})$.
  \end{enumerate}
\end{lem}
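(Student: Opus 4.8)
The plan is to treat the two cases of the lemma separately: in each I would use the preceding lemma to locate precisely where the subterm occurrences of $e$ in $B^*$ can sit, and then carry out the replacement occurrence by occurrence. Write $B^* \seq B(u) \lif B(\meps y{B(y)})$, so that the antecedent is $\st{B(y)}{y}{u}$ and the consequent is $\st{B(y)}{y}{\meps y{B(y)}}$, and set $u' = \ST u e t$. I take $e$ and $\meps y{B(y)}$ to be distinct critical \eps-terms, i.e.\ $e \not\seq \meps y{B(y)}$, as is implicit in the setup (if $e \seq \meps y{B(y)}$ the displayed forms fail). Two facts drive everything. First, by the preceding lemma (this is where $\rk e = \rk\pi$ enters) every subterm occurrence of $e$ in $B^*$ is a subterm of $B(y)$ or of $u$; in particular no such occurrence straddles the body $B(y)$ and a term substituted for $y$. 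Second, if $e$ is a subterm of $B(y)$ then $y \notin \FV(e)$: otherwise a variant of $e$ occurs in $\meps y{B(y)}$ with $y$ free in it, making $e$ subordinate to $\meps y{B(y)}$ and forcing $\rk{\meps y{B(y)}} > \rk e = \rk\pi \ge \rk{\meps y{B(y)}}$, a contradiction. Note also that $B(y)$ contains no $y$-binder (as $y$ is free in it), so the epsilon terms $\meps y{B'(z,y)}$ formed below are well defined.

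In case~(1), $e$ is not a subterm of $B(y)$, hence not a proper subterm of $\meps y{B(y)}$ either (every proper subterm of $\meps y{B(y)}$ is a subterm of $B(y)$), and $e \not\seq \meps y{B(y)}$, so $e$ is not a subterm of $\meps y{B(y)}$ at all. By the preceding lemma every subterm occurrence of $e$ in $B^*$ therefore lies inside a copy of $u$, and every such copy lies in the antecedent. Consequently the consequent is left unchanged, while replacing $e$ by $t$ inside the copies of $u$ in $\st{B(y)}{y}{u}$ produces $\st{B(y)}{y}{u'} \seq B(u')$. Thus $\ST{B^*}{e}{t} \seq B(u') \lif B(\meps y{B(y)})$.

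In case~(2), $e$ is a subterm of $B(y)$; pick a fresh variable $z$ and let $B'(z,y)$ be the result of replacing every subterm occurrence of $e$ in $B(y)$ by $z$, so $B(y) \seq \st{B'(z,y)}{z}{e}$. Because $y \notin \FV(e)$, substituting a term for $y$ in $B(y)$ does not disturb the marked occurrences of $e$, and $e$ is also a subterm of $\meps y{B(y)} \seq \st{(\meps y{B'(z,y)})}{z}{e}$, with $\ST{\meps y{B(y)}}{e}{t} \seq \meps y{B'(t,y)}$. Hence the antecedent $\st{B(y)}{y}{u}$ equals $B'(e,u)$ with $e$ at the marked positions (and possibly further subterm occurrences of $e$ inside $u$), and the consequent $\st{B(y)}{y}{\meps y{B(y)}}$ equals $B'(e,\meps y{B(y)})$. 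By the preceding lemma all subterm occurrences of $e$ in $B^*$ are the marked ones in these $B'(e,-)$ patterns together with the occurrences inside the copies of $u$ and inside the copies of $\meps y{B(y)}$; replacing each by $t$ then gives $\ST{B^*}{e}{t} \seq B'(t,u') \lif B'(t,\meps y{B'(t,y)})$, as claimed.

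The bookkeeping that substitution for $y$ commutes with the replacement of $e$ by $t$ is routine given $y \notin \FV(e)$, modulo the usual bound-variable renaming. The one genuinely load-bearing point, and the main obstacle to a fully careful write-up, is the clean separation of true subterm occurrences of $e$ from occurrences trapped under binders, so that the replacement lands only where the two displayed forms predict — and this is exactly what the preceding lemma and the fact $y \notin \FV(e)$, both resting on the maximality of $\rk e$, provide.
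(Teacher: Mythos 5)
Your proof is correct and is exactly the detailed verification behind the paper's one\hyp{}line ``by inspection'': you use the preceding lemma to rule out occurrences of $e$ that straddle the body $B(y)$ and the terms substituted for $y$, and you correctly isolate the additional fact that $y \notin \FV(e)$ whenever $e$ is a subterm of $B(y)$ (again by rank\hyp{}maximality, via subordination), which is the one point the paper leaves entirely tacit. Your explicit flagging of the hidden hypothesis $e \not\seq \meps y{B(y)}$ is also right, since the critical formulas belonging to $e$ itself are treated separately in the main argument where this lemma is applied.
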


\begin{proof}
  By inspection.
\end{proof}

\begin{lem}
  If $\proves[\pi]{\ECe} E$ and $E$ does not contain \eps, then there
  is a proof $\pi'$ such that $\proves[\pi']{\ECe} E$ and $\rk{\pi'}
  \le \rk{pi} = r$ and $o(\pi', r) < o(\pi, r)$.
\end{lem}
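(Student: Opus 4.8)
The plan is to eliminate all critical $\eps$-terms of maximal rank $r$ that share one type, reducing the $r$-order by (at least) one. Fix a type $p$ such that some critical $\eps$-term of rank $r$ and type $p$ occurs in $\pi$; since $r = \rk{\pi}$ is maximal and (by the first lemma above) such maximal-rank $\eps$-terms cannot be nested inside each other across critical formulas in the forbidden way, the $\eps$-terms of type $p$ and rank $r$ can be linearly ordered by, say, ``$e$ precedes $e'$ if $e$ is a proper subterm of $e'$'' — but in fact for terms of the \emph{same} type the relevant observation is that if $e \seq \meps x{A(x)}$ and $e' \seq \meps x{A(x')}$ both have type $p$ and rank $r$, then neither is subordinate to the other, and one can pick $e$ to be one whose ``argument terms'' $t_1,\dots,t_n$ (the subterms filling the type's free-variable slots) do not themselves contain any $\eps$-term of type $p$ and rank $r$. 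Call this a \emph{minimal} critical $\eps$-term of type $p$.

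Having fixed such a minimal $e = \meps x{A(x)}$ with critical formulas $A(t_1) \lif A(e), \dots, A(t_k) \lif A(e)$ appearing in $\pi$ (these are all the critical formulas for $e$; note $e$ does not occur in any $t_j$ by minimality), the idea is to \emph{substitute $e$ away}. Among $t_1, \dots, t_k$, pick one — call it $s$ — such that $A(s)$ is, in an appropriate sense, the ``best'' choice; the classical move is: either all of $A(t_1), \dots, A(t_k)$ get refuted, in which case we replace $e$ throughout $\pi$ by (say) the fixed term $t_1$ and add nothing, or we keep the disjunction of cases. Concretely, form the $k+1$ proofs obtained from $\pi$ by the global replacement $\ST{\pi}{e}{t_j}$ for $j = 1, \dots, k$, together with $\ST{\pi}{e}{t_1}$ again as a default; by the second and third lemmas above, each critical formula $B^* \seq B(u) \lif B(\meps y{B(y)})$ of $\pi$ either survives such a replacement as a critical formula for the same $\eps$-term (its type and rank unchanged) or, when $e$ sits inside $B(y)$, becomes a critical formula for a \emph{new} $\eps$-term $\meps y{B'(t_j, y)}$ — crucially of rank $\le r$ and, since by minimality $e$ occurred in $B(y)$ only in argument position, the new term has \emph{strictly smaller} rank or is not of type $p$. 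The critical formulas $A(t_j) \lif A(e)$ for $e$ itself turn, under $\ST{\cdot}{e}{t_j}$, into $A(t_j') \lif A(t_j')$ — tautologies, hence deletable — while under the other substitutions they become critical formulas for no maximal term of type $p$. One then splices these $k{+}1$ modified proofs together using the propositional tautology $\bigl(A(t_1) \lif A(t_1)\bigr) \land \dots$ — more precisely, using that $E$ follows in $\EC$ from the hypotheses $A(t_1) \lif E, \dots, A(t_k) \lif E$ and $(A(t_1) \lif E) \lif \dots$; the standard bookkeeping is that $\proves{\EC}$ derives $E$ from ``$E$ assuming $A(e)$ is interpreted by each $t_j$ in turn,'' and the disjunction of the $A(t_j)$ together with the default case is tautologically exhaustive.

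The bound to verify is then: in the resulting proof $\pi'$, every critical $\eps$-term of rank $r$ and type $p$ has been removed (those for $e$ vanished; no new ones of this type and rank $r$ were introduced, by the lemmas), so $o(\pi', r) < o(\pi, r)$ — unless a new critical $\eps$-term of rank $r$ appears that is \emph{not} of type $p$ but which nonetheless could have been present already; a careful count, using that the set of types of maximal-rank critical $\eps$-terms in $\pi'$ is a subset of that in $\pi$, still yields $\rk{\pi'} \le r$ and a strict drop in the number of $\eps$-terms of maximal rank. (If $\rk{\pi'} < r$ strictly, $o(\pi', r) = 0 < o(\pi,r)$ trivially.)

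\textbf{The main obstacle} I expect is the last bookkeeping: showing that the global replacement $\ST{\pi}{e}{t_j}$ genuinely produces a legal $\ECe$-proof — every line that was an instance of an axiom stays one, every $MP$ inference stays valid — and, above all, that \emph{no new critical $\eps$-term of rank $r$} is created by the substitution. This is where the three preceding lemmas do the real work: the first rules out the bad nesting configuration for maximal-rank terms, and the second and third pin down exactly what each critical formula becomes after replacement, so that one can check the rank of every newly-arising $\eps$-term. The delicate point is that substituting $t_j$ (which may itself be complex) for $e$ inside some other critical formula could in principle raise a rank; the minimality choice of $e$ (its argument terms contain no rank-$r$ type-$p$ $\eps$-term, and $e$ is not a proper subterm of any rank-$r$ type-$p$ term) is precisely what prevents this, and spelling that out carefully is the crux of the argument.
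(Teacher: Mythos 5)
Your overall strategy is the paper's: split into cases according to which of the witness terms $t_1,\dots,t_k$ of $e$'s critical formulas satisfies $A$, replace $e$ by $t_i$ throughout $\pi$ in the $i$-th branch, use the preceding lemmas to check that no other rank-$r$ critical term is disturbed, and recombine the branches propositionally. But two steps do not go through as you describe them. First, in the branch $\ST{\pi}{e}{t_i}$ the critical formulas belonging to $e$ do \emph{not} all become tautologies. The formula $A(t_l)\lif A(e)$ becomes $A(t_l')\lif A(t_i)$ with $t_l'\seq\ST{t_l}{e}{t_i}$; for $l\neq i$ this is neither a tautology nor a critical formula nor any other axiom, so the substituted sequence is not yet a proof. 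The point you are missing is that its \emph{consequent} is exactly $A(t_i)$, so it follows by (MP) from the branch hypothesis $A(t_i)$ and the tautology $A(t_i)\lif(A(t_l')\lif A(t_i))$ --- uniformly in $l$, and regardless of whether $e$ occurs in $t_l$. That is why each branch proves $A(t_i)\lif E$ via Lemma~\ref{ded-lemma}, and why no ``minimality'' selection of $e$ is needed: the argument never requires $e$ to be absent from the $t_l$. (Your minimality condition concerns the argument terms filling the slots of $e$'s type, which are a different set of terms from the witness terms of the critical formulas; a critical formula such as $A(f(e))\lif A(e)$ is perfectly possible, so the claim that ``$e$ does not occur in any $t_j$ by minimality'' is unjustified.)

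Second, your default branch fails. You take $\ST{\pi}{e}{t_1}$ under the hypothesis $\lnot\bigvee_j A(t_j)$; but in that proof the critical formulas for $e$ have become $A(t_j')\lif A(t_1)$, and from $\lnot A(t_1),\dots,\lnot A(t_k)$ you can derive neither $\lnot A(t_j')$ (since $t_j'$ need not be $t_j$) nor $A(t_1)$ (you have assumed its negation). The correct default branch uses $\pi$ itself, \emph{unsubstituted}: there the critical formulas still read $A(t_j)\lif A(e)$, and each follows from the hypothesis by (MP) on the tautology $\lnot\bigvee_j A(t_j)\lif(A(t_j)\lif A(e))$, after which $e$ is no longer critical. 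The $k+1$ conditionals $A(t_i)\lif E$ and $\lnot\bigvee_j A(t_j)\lif E$ then yield $E$ by a single propositional tautology and $k+1$ applications of (MP). With these two repairs your argument coincides with the paper's.
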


\begin{proof}
  Let $e$ be an \eps-term critical in $\pi$ and let $A(t_1) \lif
  A(e)$, dots, $A(t_n) \lif A(e)$ be all its critical formulas in
  $\pi$.

  Consider $\ST \pi e t_i$, i.e., $\pi$ with $e$ replaced by $t_i$
  throughout.  Each critical formula belonging to $e$ now is of the
  form $A(t_j') \lif A(t_i)$, since $e$ obviously cannot be a subterm
  of $A(x)$ (if it were, $e$ would be a subterm of $\meps x {A(x)}$,
  i.e., of itself!). Let $\hat\pi_i$ be the sequence of tautologies
  $A(t_i) \lif (A(t_j') \lif A(t_i))$ for $i = 1$, \dots,~$n$,
  followed by $\ST \pi e t_i$.  Each one of the formulas $A(t_j') \lif
  A(t_i)$ follows from one of these by (MP) from $A(t_i)$. Hence,
  $A(t_i) \proves[\hat\pi_i]{\ECe} E$. Let $\pi_i = \hat\pi_i[A_i]$ as
  in Lemma~\ref{ded-lemma}.  We have $\proves[\pi_i]{\ECe} A_i \lif E$.

  The \eps-term $e$ is not critical in $\pi_i$: Its original critical
  formulas are replaced by $A(t_i) \lif (A(t_j') \lif A(t_i))$, which
  are tautologies.  By (1) of the preceding Lemma, no critical
  \eps-term of rank~$r$ was changed at all. By (2) of the preceding
  Lemma, no critical \eps-term of rank $< r$ was replaced by a
  critical \eps-term of rank~$\ge r$. Hence, $o(\pi_i, r) = o(\pi) -
  1$.

  Let $\pi''$ be the sequence of tautologies $\lnot \bigvee_{i=1}^n
  A(t_i) \lif (A(t_i) \lif A(e))$ followed by $\pi$.  Then
  $\bigvee_{i=1}^n A(t_i) \proves[\pi''] E$, $e$ is not critical in
  $\pi''$, and otherwise $\pi$ and $\pi''$ have the same critical
  formulas. The same goes for $\pi''[\lnot \bigvee A(t_i)]$, a proof
  of $\lnot\bigvee A(t_i) \lif E$.  

  We now obtain $\pi'$ as the $\pi_i$, $i = 1$, \dots,~$n$, followed
  by $\pi[\lnot \bigvee_{i=1}^n]$, followed by the tautology 
  \[
  (\lnot \bigvee A(t_i) \lif E) \lif (A(t_1) \lif E) \lif \dots \lif
  (A(t_n) \lif E) \lif E)\dots)
  \]
  from which $E$ follows by $n+1$ applications of (MP).
\end{proof}

\begin{proof}[of the first \eps-Theorem]
  By induction on $o(\pi, r)$, we have: if $\proves[\pi]{\ECe} E$,
  then there is a proof $\pi^*$ of $E$ with $\rk{\pi^-} < r$. By
  induction on $\rk(\pi)$ we have a proof $\pi^{**}$ of $E$ with
  $\rk{\pi^{**}} = 0$, i.e., without critical formulas at all.
\end{proof}

\begin{ex}
  Check these proofs. Can you think of ways to improve the proofs?
\end{ex}

\begin{ex}
  If $E$ contains \eps-terms, the replacement of \eps-terms in the
  construction of $\pi_i$ may change $E$---but of course only the
  \eps-terms appearing as subterms in it.  Use this fact to prove: If
  $\proves{\PCe} E(e)$, then $\proves{\EC} \bigvee_{i=1}^m E(t_j)$ for
  some terms $t_j$.  Can you guarantee that $t_j$ are \eps-free.
\end{ex}

\section[The Case With Identity]{\new{3} The Case With Identity}

In the presence of the identity ($=$) predicate in the language,
things get a bit more complicated. The reason is that instances of the
($=_2$) axiom schema,
\[
t = u \lif (A(t) \lif A(u))
\]
may also contain \eps-terms, and the replacement of an \eps-term~$e$
by a term~$t_i$ in the construction of $\pi_i$ may result in a formula
which no longer is an instance of ($=_2$). For instance, suppose that
$t$ is a subterm of $e = e'(t)$ and $A(t)$ is of the form $A'(e'(t))$.
Then the original axiom is
\[
t = u \lif (A'(e'(t)) \lif A'(e'(u))
\] 
which after replacing $e = e'(t)$ by $t_i$ turns into
\[
t = u \lif (A'(t_i) \lif A'(e'(u)).
\] 
So this must be avoided.  In order to do this, we first observe that
just as in the case of the predicate calculus, the instances of
($=_2$) can be derived from restricted instances. In the case of the
predicate calculus, the restricted axioms are 
\begin{align}
  t = u & \lif (P^n(s_1, \dots, t, \dots s_n) \lif P^n(s_1, \dots, u,
  \dots, s_n) \tag{$=_2'$} \\
  t = u & \lif f^n(s_1, \dots, t, \dots, s_n) = f^n(s_1, \dots, u,
  \dots, s_n) \tag{$=_2''$} \\
\intertext{to which we have to add the \emph{\eps-identity axiom schema:}}
  t = u & \lif \meps x {A(x; s_1, \dots, t, \dots s_n)} = 
  \meps x {A(x; s_1, \dots, u, \dots s_n)} \tag{$=_\eps$}
\end{align}
where $\meps x {A(x; x_1, \dots, x_n)}$ is an \eps-type.

\begin{prop}\label{atomic}
  Every instance of $(=_2)$ can be derived from $(=_2')$, $(=_2'')$,
  and $(=_\eps)$.
\end{prop}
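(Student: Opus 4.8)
The plan is to argue that an arbitrary instance $t = u \lif (A(t) \lif A(u))$ of $(=_2)$ can be derived by a ``step-by-step'' chain of replacements, each of which is licensed by one of the restricted axioms. First I would observe that $A(u)$ is obtained from $A(t)$ by replacing some (not necessarily all) free occurrences of a subterm; more carefully, one should set this up so that $A(x)$ is a formula with a distinguished variable $x$, $A(t) = \st{A(x)}{x}{t}$ and $A(u) = \st{A(x)}{x}{u}$. The proof then proceeds by induction on the complexity of $A(x)$ as a formula — or really on the structure of the term/formula in which the distinguished occurrences of $x$ sit.

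The base cases are where $x$ occurs only as an immediate argument inside a single atomic context. If $A(x) \seq P^n(s_1, \dots, x, \dots, s_n)$ with $x$ the only occurrence, this is exactly $(=_2')$. The genuinely new base case is where the occurrence of $x$ sits directly inside an $\eps$-type, i.e.\ $A(x)$ has the form $C(\meps y{D(y; s_1, \dots, x, \dots, s_n)})$ where $\meps y{D(y; x_1, \dots, x_n)}$ is an $\eps$-type and the distinguished $x$ is the argument in position $i$; here $(=_\eps)$ gives $t = u \lif \meps y{D(y; \dots t \dots)} = \meps y{D(y; \dots u \dots)}$, and then one more application of the induction hypothesis (for the shorter context $C(\cdot)$) together with $(=_2)$-for-shorter-formulas — or, to stay strictly within the restricted axioms, transitivity of $=$ and a further appeal to the inductive construction — propagates the identity up through $C$. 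For the inductive step, one peels off the outermost logical connective or function symbol of $A(x)$: if $A(x) \seq \lnot B(x)$, use the induction hypothesis on $B$ and a tautology; if $A(x) \seq (B(x) \land C(x))$ etc., likewise combine the two instances for $B$ and $C$ propositionally; if $A(x) \seq P^n(\dots, r(x), \dots)$ where $r(x)$ is a compound term containing $x$, use $(=_2'')$ to push the identity from $r(t) = r(u)$ one layer out, after first deriving $r(t) = r(u)$ by the induction hypothesis applied to the term $r$. Throughout, the point of using $\eps$-\emph{types} rather than $\eps$-terms in $(=_\eps)$ is exactly that substituting terms for the parameter variables of a type yields another legitimate $\eps$-term, and the free variables of the type are precisely its parameters, so the replacement never accidentally captures or escapes a bound variable; this is what makes the inductive bookkeeping go through.

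The main obstacle, I expect, is not any single case but the careful handling of nested $\eps$-terms: a distinguished occurrence of $x$ in $A(x)$ may lie inside an $\eps$-term which is itself subordinate to another $\eps$-term, so one cannot naively treat ``the context around $x$'' as a formula $A'(z)$ in a fresh variable $z$ — as the remark after the definition of ``subordinate'' already warns, $\meps x{A'(z)}$ need not even be a term, and the replaced term need not be free for $z$. The correct move is to induct on $\eps$-types from the inside out: handle the innermost $\eps$-term containing the relevant occurrence of $x$ first via $(=_\eps)$ applied to its type, obtaining an identity between two $\eps$-terms, then treat that identity as the new hypothesis and continue outward, using transitivity of $=$ (itself a consequence of $(=_1)$ and $(=_2')$) to chain the intermediate identities. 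Making the induction measure precise so that each step strictly decreases it — rank of the outermost relevant $\eps$-term, with formula complexity as a tie-breaker — is the bit that requires care; the individual derivations in each case are short and routine once the right induction is in place.
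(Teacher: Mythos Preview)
The paper leaves this proposition as an exercise, so there is no detailed argument to compare against. Your overall strategy---a simultaneous induction on terms and formulas, using $(=_2'')$ to push identities through function symbols, $(=_\eps)$ to push them through $\eps$-terms, $(=_2')$ at atomic formulas, and propositional reasoning at connectives---is the standard one and is correct.

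You are, however, making the nested-$\eps$ case harder than it needs to be. The worry you raise about subordination (that one cannot always extract a context $A'(z)$ with $z$ fresh) is real in general, but it does not bite here, because you never need to pull out an arbitrary occurrence from inside a binder. For an $\eps$-term $e(x) \seq \meps y{B(y, x)}$, pass to its type $p \seq \meps y{B'(y; z_1, \dots, z_m)}$; then $e(x) \seq \st{\st{p}{z_1}{s_1(x)}\dots}{z_m}{s_m(x)}$, where the $s_i$ are the \emph{immediate} subterms of $e(x)$. Each $s_i$ is a proper subterm of $e(x)$, so the induction hypothesis already yields $t = u \lif s_i(t) = s_i(u)$. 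Then $m$ successive instances of $(=_\eps)$, each changing a single argument position, together with transitivity of $=$, give $t = u \lif e(t) = e(u)$. No auxiliary rank-based measure is required; ordinary structural complexity of the expression strictly decreases at every step, for terms and formulas alike.

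One small slip of terminology: in the atomic case $A(x) \seq P^n(\dots, r(x), \dots)$ with $r$ compound, you write ``use $(=_2'')$ to push the identity from $r(t) = r(u)$ one layer out''. But $(=_2'')$ concerns function symbols, not predicates. What you want is: first derive $t = u \lif r(t) = r(u)$ by the term half of the induction (which is where $(=_2'')$ and $(=_\eps)$ do their work), and then apply $(=_2')$ with $r(t)$, $r(u)$ playing the roles of $t$, $u$.
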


\begin{proof}
  Exercise.
\end{proof}

Now replacing every occurrence of $e$ in an instance of ($=_2'$) or
($=_2''$)---where $e$ obviously can only occur inside one of the terms
$t$, $u$, $s_1$, \dots, $s_n$---results in a (different) instance of
($=_2'$) or ($=_2''$).  The same is true of ($=_\eps$), \emph{provided
  that} the $e$ is neither $\meps x{A(x; s_1, \dots, t, \dots s_n)}$
nor $\meps x{A(x; s_1, \dots, u, \dots s_n)}$.  This would be
guaranteed if the type of $e$ is not $\meps x {A(x; x_1, \dots,
  x_n)}$, in particular, if the rank of $e$ is higher than the rank of
$\meps x {A(x; x_1, \dots, x_n)}$.  Moreover, the result of replacing
$e$ by $t_i$ in any such instance of $(=_\eps$) results in an instance
of $(=_\eps)$ which belongs to the same \eps-type. Thus, in order for
the proof of the first \eps-theorem to work also when $=$ and axioms
$(=_1)$, $(=_2')$, $(=_2''$), and $(=_\eps)$ are present, it suffices
to show that the instances of $(=_\eps)$ with \eps-terms of
rank~$\rk{\pi}$ can be removed.  Call an \eps-term $e$ \emph{special}
in~$\pi$, if $\pi$ contains an occurrence of $t = u \lif e' = e$ as an
instance of $(=_\eps)$.

\begin{thm}
  If $\proves[\pi]{\ECe} E$, then there is a proof $\pi^=$ so that
  $\proves[\pi^=]{\ECe} E$, $\rk{\pi^=} = \rk{pi}$, and the rank of the
  special \eps-terms in $\pi^=$ has rank $< \rk{\pi}$.
\end{thm}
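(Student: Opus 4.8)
The plan is to remove, one \eps-type at a time, the instances of $(=_\eps)$ whose \eps-type has rank $r := \rk{\pi}$, by induction on the number of \eps-types $p$ with $\rk{p} = r$ for which $\pi$ contains an instance of $(=_\eps)$. By Proposition~\ref{atomic} I may assume $\pi$ uses only the restricted identity axioms $(=_2')$, $(=_2'')$, $(=_\eps)$, so the special \eps-terms of $\pi$ are exactly those occurring as a whole \eps-term in some instance of $(=_\eps)$, and the special ones of rank $r$ are those whose \eps-type has rank $r$. Fix such a type $p = \meps x {A(x; x_1, \dots, x_n)}$. The structural fact I would lean on is that, since $r$ is maximal, no \eps-term of type $p$ can be subordinate to any \eps-term of $\pi$ (that would force rank $> r$), and hence every occurrence in $\pi$ of an \eps-term of type $p$ is as a genuine subterm.

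Suppose first that no \eps-term of type $p$ is critical in $\pi$. Let $\pi^\dagger$ be obtained by replacing, innermost first, every occurrence of an \eps-term $\meps x {A(x; t_1, \dots, t_n)}$ of type $p$ by $f(t_1^\dagger, \dots, t_n^\dagger)$, where $f$ is an $n$-ary function symbol not occurring in $\pi$ (the language has infinitely many, so no extension is needed). By the no-subordination fact this replacement is well defined and commutes with term- and formula-formation, as in the discussion following Proposition~\ref{atomic}. Going through $\pi^\dagger$ line by line: tautologies, $(=_1)$, and instances of $(=_2')$, $(=_2'')$ become instances of the same schema (a replaced \eps-term can only sit inside an argument term); an instance of $(=_\eps)$ of type $p$ becomes an instance $t = u \lif f(\dots, t, \dots) = f(\dots, u, \dots)$ of $(=_2'')$ for $f$ (the shape of the $(=_\eps)$ schema is precisely what makes the two \eps-terms differ in a single argument position, which becomes the distinguished position of the $(=_2'')$ instance); an instance of $(=_\eps)$ of a type $q \neq p$ becomes an instance of $(=_\eps)$ of type $q$, of the same rank; a critical formula $B(w) \lif B(\meps y {B(y)})$ becomes $B^\dagger(w^\dagger) \lif B^\dagger(\meps y {B^\dagger(y)})$, again a critical formula of the same rank (here we use that $\meps y {B(y)}$ is not of type $p$); and (MP) is preserved. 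Since $E$ is \eps-free in the intended application, $E^\dagger \seq E$, so $\pi^\dagger$ is a proof of $E$; no instance of $(=_\eps)$ of type $p$ survives, none of rank $r$ and type $\neq p$ is created, and $\rk{\pi^\dagger} \le r$, so the induction hypothesis finishes this case. (If $\rk{\pi^\dagger}$ has dropped below $r$ that only helps; exact equality can be restored by appending an unused critical formula whose \eps-term has rank $r$.)

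It remains --- and this is the main obstacle --- to reduce to the situation of the previous paragraph, i.e., to make every critical \eps-term of type $p$ non-critical without raising the rank above $r$. For a critical \eps-term $e = \meps x {A(x; \vec t)}$ of type $p$, with $\vec t = (t_1, \dots, t_n)$ and critical formulas $A(w_i; \vec t) \lif A(e; \vec t)$ ($i = 1, \dots, k$), the natural move is the construction of the rank-reduction Lemma of the preceding section: form the copies $\ST{\pi}{e}{w_i}$, apply the Deduction Theorem (Lemma~\ref{ded-lemma}), and recombine as in that Lemma via the case distinction on the $A(w_i; \vec t)$ and their joint negation. The delicate point --- the heart of the section --- is that $e$ may itself be \emph{special}: in $\ST{\pi}{e}{w_i}$ an instance of $(=_\eps)$ of type $p$ in which $e$ is a whole \eps-term would turn into the non-axiom $\dots \lif w_i = e'$. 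The remedy is that in forming that copy $e$ need not be erased from such $(=_\eps)$ instances, nor from the part of the proof feeding on them: only the occurrences of $e$ inside its own critical formulas need be replaced (these then follow, just as in the rank-reduction Lemma, from the tautology $A(w_i; \vec t) \lif (A(w_j; \vec t) \lif A(w_i; \vec t))$ once $A(w_i; \vec t)$ is taken as a hypothesis). Making this selective replacement precise --- tracking which occurrences of $e$ descend from critical formulas and which from $(=_\eps)$ instances, and checking that the resulting sequence, with the Deduction Theorem, is still a proof of $E$ --- is where the real work lies; the type-stability lemmas of this chapter are exactly what is needed. Each such application strictly decreases the number of critical \eps-terms of type $p$ and does not raise the rank above $r$, so finitely many of them bring us to the case already handled.
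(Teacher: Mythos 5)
There is a genuine gap, and it sits exactly where you place ``the real work.'' Your plan is to first make every critical \eps-term of type~$p$ non-critical (via the rank-reduction construction of the previous section) and only then dispose of the $(=_\eps)$ instances. But the rank-reduction construction is precisely what the presence of $(=_\eps)$ blocks: that is the reason this theorem exists as a \emph{preprocessing} step. Your proposed remedy---replace $e$ by $w_i$ only in the occurrences ``descending from'' its critical formulas, leaving the occurrences in $(=_\eps)$ instances alone---is not a well-defined operation on a Hilbert-style proof. A line of $\pi$ is just a formula; when it is obtained by (MP) from two earlier lines, one fed by a critical formula for~$e$ and one fed by an instance of $(=_\eps)$ containing~$e$, the occurrences of $e$ in it have no canonical provenance, and a partial replacement destroys the exact syntactic match that (MP) requires between the minor premise and the antecedent of the major premise. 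So the critical case is not reduced to the non-critical case, and the induction does not get off the ground.

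The paper's proof takes the opposite route and never touches the critical formulas' status at all. Given an instance $t = u \lif e' = e$ of $(=_\eps)$ with $e$ chosen of maximal rank, maximal degree, and maximal in a carefully constructed order~$\prec$ on the instances of its type, one forms $\ST{\pi}{e}{e'}$: the chosen axiom becomes $t = u \lif e' = e'$ (provable from $(=_1)$), the other $(=_\eps)$ instances are repaired by new $(=_\eps)$ instances that are strictly $\prec$-smaller or of lower rank, and the damaged critical formulas for $e$ are rederived \emph{under the hypothesis $t = u$} using $(=_2)$. The deduction theorem then gives $t = u \lif E$; the branch $t \neq u$ yields $t \neq u \lif E$ trivially since it proves the offending axiom outright; combining the two gives $E$. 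Termination comes from $\prec$, not from eliminating criticality. None of these ingredients---substituting $e'$ for $e$, the hypothesis $t=u$ with its two-branch recombination, or the order $\prec$---appears in your proposal. Your fresh-function-symbol device for the non-critical case is a nice observation (it converts $(=_\eps)$ of type $p$ into $(=_2'')$ wholesale), but note that it silently assumes $E$ contains no \eps-term of type~$p$, which the theorem as stated does not grant; and in any case that is not where the difficulty lies.
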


The basic idea is simple: Suppose $t = u \lif e' = e$ is an instance
of $(=_\eps)$, with $e' \seq \meps x {A(x; s_1, \dots, t, \dots s_n)}$
and $e \seq \meps x {A(x; s_1, \dots, u, \dots s_n)}$.  Replace $e$
everywhere in the proof by~$e'$.  Then the instance of $(=_\eps)$
under consideration is removed, since it is now provable from $e' =
e'$.  This potentially interferes with critical formulas belonging to
$e$, but this can also be fixed: we just have to show that by a
judicious choice of~$e$ it can be done in such a way that the other
$(=_\eps)$ axioms are still of the required form.

Let $p = \meps x{A(x; x_1, \dots, x_n)}$ be an \eps-type of rank
$\rk{\pi}$, and let $e_1$, \dots, $e_l$ be all the \eps-terms of
type~$p$ which have a corresponding instance of~$(=_\eps)$ in~$\pi$.
Let $T_i$ be the set of all immediate subterms of $e_1$, \dots, $e_l$,
in the same position as $x_i$, i.e., the smallest set of terms so that
if $e_i \seq \meps x{A(x; t_1, \dots, t_n)}$, then $t_i \in T$. Now
let let $T^*$ be all instances of $p$ with terms from $T_i$
substituted for the $x_i$.  Obviously, $T$ and thus $T^*$ are finite
(up to renaming of bound variables). Pick a strict order $\prec$ on
$T$ which respects degree, i.e., if $\deg{t} < \deg{u}$ then $t \prec
u$.  Extend $\prec$ to $T^*$ by
\[
 \meps x{A(x; t_1, \dots, t_n)} \prec  \meps x{A(x; t'_1, \dots, t'_n)} 
\]
iff 
\begin{enumerate}
\item $\max\{\deg{t_i} : i = 1, \dots, n\} < \max\{\deg{t_i} : i = 1,
  \dots, n\}$ or
\item $\max\{\deg{t_i} : i = 1, \dots, n\} = \max\{\deg{t_i} : i = 1,
  \dots, n\}$  and
\begin{enumerate}
\item $t_i \seq t_i'$ for $i = 1$, \dots,~$k$.
\item $t_{k+1} \prec t_{k+1}'$
\end{enumerate}
\end{enumerate}

\begin{lem}
  Suppose $\proves[\pi]{\ECe} E$, $e$ a special \eps-term in $\pi$
  with $\rk{e} = \rk{\pi}$, $\deg{e}$ maximal among the special
  \eps-terms of rank~$\rk{\pi}$, and $e$ maximal with respect to
  $\prec$ defined above. Let $t = u \lif e' = e$ be an instance of
  $(=_\eps)$ in $\pi$. Then there is a proof $\pi'$,
  $\proves[\pi']{\ECe} E$ such that
  \begin{enumerate}
    \item $\rk{\pi'} = \rk{\pi}$
    \item $\pi'$ does not contain $t = u \lif e' = e$ as an axiom
    \item Every special \eps-term $e''$ of $\pi'$ with the same type
      as $e$ is so that $e'' \prec e$.
    \end{enumerate}
\end{lem}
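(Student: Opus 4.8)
The plan is to build $\pi'$ from $\pi$ by performing the substitution $\ST{\pi}{e}{e'}$ — replacing $e$ throughout $\pi$ by $e'$ — and then splicing in short derivations to repair the few lines this damages. Fix notation: let $\meps x{A(x;x_1,\dots,x_n)}$ be the common type~$p$ of $e$ and $e'$, with $e \seq \meps x{A(x;s_1,\dots,u,\dots,s_n)}$ and $e'\seq\meps x{A(x;s_1,\dots,t,\dots,s_n)}$, the varying argument sitting in position~$i$; by the proposition that the rank of an $\eps$-term depends only on its type, $\rk{e}=\rk{e'}=\rk{\pi}$. A first, useful observation: since $u,s_1,\dots,s_n$ are immediate subterms of $e$ and $t,s_1,\dots,s_n$ are immediate subterms of~$e'$, the term $e$ is not a \emph{proper} subterm of any of $t,u,s_1,\dots,s_n$ — e.g.\ if it were a proper subterm of $t$, we would get $\deg{e}<\deg{e'}$, contradicting the maximality of $\deg{e}$ among the special $\eps$-terms of rank $\rk{\pi}$. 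Consequently $\ST{\cdot}{e}{e'}$ carries the targeted axiom $t=u\lif e'=e$ to $t=u\lif e'=e'$, which I would justify in $\pi'$ by preceding it with the $(=_1)$-instance $e'=e'$ and the tautology $e'=e'\lif(t=u\lif e'=e')$ and deriving it by (MP); this gives clause~(2).

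Next I would check that $\ST{\cdot}{e}{e'}$ does not disturb the remaining lines. Tautologies go to tautologies, $(=_1)$-instances to $(=_1)$-instances, and — since in $(=_2')$ and $(=_2'')$ the term $e$ can occur only inside the displayed terms — $(=_2')$- and $(=_2'')$-instances to such instances. A critical formula belonging to an $\eps$-term $d\not\seq e$ becomes one belonging to $\ST{d}{e}{e'}$ (the matrix of $d$ contains no subterm $\seq e$, for such a subterm would be subordinate to $d$ and hence of rank $>\rk{\pi}$). Since $\rk{e'}=\rk{e}$, no critical $\eps$-term of rank $>\rk{\pi}$ is created, while a critical $\eps$-term of rank exactly $\rk{\pi}$ still survives, so $\rk{\pi'}=\rk{\pi}$: clause~(1). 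For clause~(3): every special $\eps$-term of type~$p$ occurring in $\pi'$ is either $e'$, or the $\ST{\cdot}{e}{e'}$-image of a special type-$p$ $\eps$-term $\neq e$ of $\pi$ (which by $\deg{e}$-maximality is not altered by the substitution), or an $\eps$-term introduced by the repairs below; since $e$ is $\prec$-maximal among the type-$p$ $\eps$-terms carrying a $(=_\eps)$-instance and $e'\prec e$, and since the repairs are arranged to use only $\prec$-smaller type-$p$ $\eps$-terms, each of these is $\prec e$.

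Two classes of lines still need genuine repair. First, a $(=_\eps)$-instance belonging to a type other than $p$, or one of type~$p$ in which $e$ is neither of the two outer $\eps$-terms, is taken by $\ST{\cdot}{e}{e'}$ to another $(=_\eps)$-instance (here $\deg{e}$-maximality is used again, so that $e$ cannot sit as a proper subterm of an outer $\eps$-term of a type-$p$ instance). But when $e$ \emph{is} an outer $\eps$-term of a type-$p$ $(=_\eps)$-instance, the substitution generally produces a ``pseudo-instance'' whose two sides disagree in two argument positions (or whose hypothesis no longer matches), and this must be re-derived by chaining it through one or two genuine $(=_\eps)$-axioms for type-$p$ $\eps$-terms that are $\prec e$; this is exactly why clause~(3) asks only for $\prec$-smaller residuals, and it is the point at which the order~$\prec$, the fact that $\prec$ respects degree, and the forced inequality $t\prec u$ (from $\prec$-maximality of $e$) are all used together. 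Second, a critical formula $A(w;s_1,\dots,u,\dots,s_n)\lif A(e;s_1,\dots,u,\dots,s_n)$ belonging to $e$ is sent by $\ST{\cdot}{e}{e'}$ to $A(w^*;s_1,\dots,u,\dots,s_n)\lif A(e';s_1,\dots,u,\dots,s_n)$ — no longer a critical formula, since the outer arguments still read $u$ while $e'$ carries $t$ — and this line too must be replaced by a short derivation (from the critical formula belonging to $e'$ together with the $(=_2')$ and $(=_\eps)$ axioms bridging position~$i$). I expect the first of these repairs — arranging that every $(=_\eps)$-axiom surviving in $\pi'$ is either a genuine instance or derivable from genuine instances with strictly $\prec$-smaller $\eps$-terms — to be the principal obstacle, as it is the one place where the auxiliary order~$\prec$ and the joint maximality of $\deg{e}$ and of $e$ must be exploited in concert; the secondary nuisance is the bound-variable bookkeeping inside $\ST{\cdot}{e}{e'}$, which matters because $\FV(e)$ and $\FV(e')$ can differ when $\FV(t)\neq\FV(u)$.
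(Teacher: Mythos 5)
Your overall strategy---replace $e$ by $e'$ throughout $\pi$ and repair the damaged lines---is the paper's, and your inventory of which lines get damaged (the targeted axiom, the critical formulas belonging to~$e$, and the $(=_\eps)$-instances of type~$p$ having $e$ as an outer term) is accurate. But the two repairs you defer as ``the principal obstacle'' cannot be carried out as you describe them, and the missing ingredient is not the order~$\prec$ but a logical hypothesis. Take a damaged $(=_\eps)$-instance from your first repair class: after substitution it reads, say, $t' = u \lif e''' = e'$, whereas the genuine axiom available for the pair $(e''', e')$ is $t' = t \lif e''' = e'$. No amount of chaining through further genuine $(=_\eps)$- or $(=_2')$-instances bridges $t' = u$ to $t' = t$---you need $t = u$ itself. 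The same holds for the damaged critical formulas: $A(t'', u) \lif A(\meps x{A(x,t)}, u)$ follows from the genuine critical formula $A(t'',t) \lif A(\meps x{A(x,t)},t)$ only modulo $t = u$.

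The paper's proof therefore performs all repairs \emph{under the hypothesis $t = u$}: the repaired sequence is a proof of $E$ from $\{t=u\}$, which Lemma~\ref{ded-lemma} converts into a proof of $t = u \lif E$. A second, trivial branch handles the other case: under the hypothesis $t \neq u$ the targeted axiom $t = u \lif e' = e$ is a tautological consequence, so the \emph{original} proof $\pi$ (prefixed by the tautology $\lnot(t{=}u) \lif (t = u \lif e'=e)$) already proves $E$ from $t \neq u$ without using that axiom as an axiom, giving $t \neq u \lif E$; the two branches then combine by a propositional tautology and (MP). This case split and the appeal to the deduction theorem are the heart of the argument and are entirely absent from your proposal; without them your plan stalls exactly where you predicted it would. (Your remaining observations---that degree-maximality of $e$ prevents it from occurring as a proper subterm of the outer terms of a type-$p$ instance, and that the residual special terms of type~$p$ end up $\prec e$---do match the paper's bookkeeping.)
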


\begin{proof}
  Let $\pi_0 = \ST \pi e {e'}$. 

  Suppose $t' = u' \lif e''' = e''$ is an $(=_\eps)$ axiom in $\pi$.

  If $\rk{e''} < \rk{e}$, then the replacement of $e$ by $e'$ can only
  change subterms of $e''$ and $e'''$. In this case, the uniform
  replacement results in another instance of $(=_\eps)$ with
  \eps-terms of the same \eps-type, and hence of the same rank $<
  \rk{\pi}$, as the original.

  If $\rk{e''} = \rk{e}$ but has a different type than $e$, then this
  axiom is unchanged in $\pi_0$: Neither $e''$ nor $e'''$ can be $\seq
  e$, bcause they have different \eps-types, and neither $e''$ nor
  $e'''$ (nor $t'$ or $u'$, which are subterms of $e''$, $e'''$) can
  contain $e$ as a subterm, since then $e$ wouldn't be degree-maximal
  among the special \eps-terms of $\pi$ of rank~$\rk{\pi}$.

  If the type of $e''$, $e'''$ is the same as that of $e$, $e$ cannot
  be a proper subterm of $e''$ or $e'''$, since otherwise $e''$ or
  $e'''$ would again be a special \eps-term of rank~$\rk{\pi}$ but of
  higher degree than~$e$.  So either $e \seq e''$ or $e \seq e'''$,
  without loss of generality suppose $e \seq e''$.  Then the
  $(=_\eps)$ axiom in question has the form
  \[
  t' = u' \lif \underbrace{\meps x {A(x; s_1, \dots t', \dots
      s_n)}}_{e'''} = \underbrace{\meps x {A(x; s_1, \dots u', \dots
      s_n)}}_{e'' \seq e}
  \]
  and with $e$ replaced by $e'$:
  \[
  t' = u' \lif \underbrace{\meps x {A(x; s_1, \dots t', \dots
      s_n)}}_{e'''} = \underbrace{\meps x {A(x; s_1, \dots t, \dots
      s_n)}}_{e'}
  \]
  which is no longer an instance of $(=_\eps)$, but can be proved from
  new instances of~$(=_\eps)$.  We have to distinguish two cases
  according to whether the indicated position of $t$ and $t'$ in $e'$,
  $e'''$ is the same or not.  In the first case, $u \seq u'$, and the
  new formula
  \begin{align}
    t' = u & \lif \underbrace{\meps x {A(x; s_1, \dots t', \dots
        s_n)}}_{e'''} = \underbrace{\meps x {A(x; s_1, \dots t, \dots
        s_n)}}_{e'} \notag\\
    \intertext{can be proved from $t = u$ together with} t' = t & \lif
    \underbrace{\meps x {A(x; s_1, \dots t', \dots s_n)}}_{e'''} =
    \underbrace{\meps x {A(x; s_1, \dots t, \dots
        s_n)}}_{e'} \tag{$=_\eps$}\\
    t = u & \lif (t' = u \lif t' = t) \tag{$=_2'$}
  \end{align}
  Since $e'$ and $e'''$ already occured in~$\pi$, by assumption $e'$, $e'''
  \prec e$.

  In the second case, the original formulas read, with terms indicated:
  \begin{align*}
    t = u & \lif \underbrace{\meps x {A(x; s_1, \dots t, \dots, u',
        \dots, s_n)}}_{e'} = \underbrace{\meps x {A(x; s_1, \dots u,
        \dots, u', \dots,
        s_n)}}_{e} \\
    t' = u' & \lif \underbrace{\meps x {A(x; s_1, \dots u, \dots, t',
        \dots, s_n)}}_{e'''} = \underbrace{\meps x {A(x; s_1, \dots u,
        \dots, u', \dots,
        s_n)}}_{e'' \seq e} 
    \intertext{and with $e$ replaced by $e'$ the latter becomes:} 
    t' = u' & \lif \underbrace{\meps x {A(x; s_1, \dots u, \dots, t', \dots
        s_n)}}_{e'''} = \underbrace{\meps x {A(x; s_1, \dots t, \dots,
        u', \dots, s_n)}}_{e'} \\
    \intertext{This new formula is provable from $t = u$ together with}
    u = t & \lif \underbrace{\meps x {A(x; s_1, \dots u, \dots, t', \dots
        s_n)}}_{e'''} = \underbrace{\meps x {A(x; s_1, \dots t, \dots,
        t', \dots, s_n)}}_{e''''} \\
    t' = u' & \lif \underbrace{\meps x {A(x; s_1, \dots t, \dots, t', \dots
        s_n)}}_{e''''} = \underbrace{\meps x {A(x; s_1, \dots t, \dots,
        u', \dots, s_n)}}_{e'}
    \end{align*}
    and some instances of $(=_2')$.  Hence, $\pi'$ contains a
    (possibly new) special \eps-term~$e''''$. However, $e''''
    \prec e$ (Exercise: prove this.)

    In the special case where $e = e''$ and $e' = e'''$, i.e., the
    instance of $(=_\eps)$ we started with, then replacing $e$ by $e'$
    results in $t = u \lif e' = e'$, which is provable from $e' = e'$,
    an instance of $(=_1)$.

    Let $\pi_1$ be $\pi_0$ with the necessary new instances of
    $(=_\eps)$, added. The instances of $(=_\eps)$ in $\pi_1$ satisfy
    the properties required in the statement of the lemma.

    However, the results of replacing $e$ by $e'$ may have impacted
    some of the critical formulas in the original proof.  For a
    critical formula to which $e \seq \meps x {A(x, u)}$ belongs is of the form
    \begin{align}
      A(t', u) & \lif A(\meps x {A(x, u)}, u) \\
      \intertext{which after replacing $e$ by $e'$ becomes}
      A(t'', u) & \lif A(\meps x {A(x, t)}, u) \label{newcrit}
    \end{align}
    which is no longer a critical formula.  This formula, however, can
    be derived from $t = u$ together with 
    \begin{align}
      A(t'', u) & \lif A(\meps x {A(x, t)}, u) \tag{\eps}\\
      t = u & \lif (A(\meps x{A(x, t)}, t) \lif A(\meps x{A(x, t)}, u))
      \tag{$=_2$} \\
      u = t & \lif (A(t'', u) \lif A(t'', t)) \tag{$=_2$}
    \end{align}
    Let $\pi_2$ be $\pi_1$ plus these derivations of (\ref{newcrit})
    with the instances of $(=_2)$ themselves proved from $(=_2')$ and
    $(=_\eps)$.  The rank of the new critical formulas is the same, so
    the rank of $\pi_2$ is the same as that of~$\pi$.  The new
    instances of $(=_\eps)$ required for the derivation of the last
    two formulas only contain \eps-terms of lower rank that that
    of~$e$. (Exercise: verify this.)

    $\pi_2$ is thus a proof of $E$ from $t = u$ which satisfies the
    conditions of the lemma.  From it, we obtain a proof $\pi_2[t =
    u]$ of $t = u \lif E$ by the deduction theorem.  On the other
    hand, the instance $t = u \lif e' = e$ under consideration can
    also be proved trivially from $t \neq u$. The proof $\pi[t \neq
    u]$ thus is also a proof, this time of $t \neq u \lif E$, which
    satisfies the conditions of the lemma. We obtain $\pi'$ by
    combining the two proofs.
\end{proof}

\begin{proof}{Proof of the Theorem}
  By repeated application of the Lemma, every instance of $(=_\eps)$
  involving \eps-terms of a given type~$p$ can be eliminated from the
  proof.  The Theorem follows by induction on the number of different
  types of special \eps-terms of rank~$\rk{\pi}$ in $\pi$.
\end{proof}

\begin{ex} Prove Proposition~\ref{atomic}.
\end{ex}

\begin{ex} Verify that $\prec$ is a strict total order.
\end{ex}

\begin{ex}
  Complete the proof of the Lemma.
\end{ex}

\end{document}